\documentclass[11pt]{article}

\usepackage{fancyhdr}

\usepackage{setspace}
\usepackage[margin=1in]{geometry}
\usepackage{amsmath} 
\usepackage{amsthm}
\usepackage{amssymb}
\usepackage{hyperref} 
\usepackage{color}
\usepackage{graphicx}

\newtheorem{theorem}{Theorem}[section]
\newtheorem{lemma}[theorem]{Lemma}
\newtheorem{proposition}[theorem]{Proposition} 
\newtheorem{corollary}[theorem]{Corollary} 
\theoremstyle{definition}
\newtheorem{definition}[theorem]{Definition} 
\newtheorem{remark}[theorem]{Remark}

\pagenumbering{gobble}
\pagestyle{fancy}  
\fancyhf{}  
\setlength{\headheight}{15pt}  
\lhead{}  
\rfoot{\thepage}  
  
\fancypagestyle{plain}{
\fancyhf{} 
\fancyfoot[R]{\thepage} 
}

\title{Generating Set for Nonzero Determinant Links Under Skein Relation}
\author{Aayush Karan}
\date{}

\onehalfspacing
\begin{document}
\maketitle

\begin{abstract}
Traditionally introduced in terms of advanced topological constructions, many link invariants may also be defined in much simpler terms given their values on a few initial links and a recursive formula on a skein triangle. Then the crucial question to ask is how many initial values are necessary to completely determine such a link invariant. We focus on a specific class of invariants known as nonzero determinant link invariants, defined only for links which do not evaluate to zero on the link determinant. We restate our objective by considering a set $\mathcal{S}$ of links subject to the condition that if any three nonzero determinant links belong to a skein triangle, any two of these belonging to $\mathcal{S}$ implies that the third also belongs to $\mathcal{S}$. Then we aim to determine a minimal set of initial generators so that $\mathcal{S}$ is the set of all links with nonzero determinant. We show that only the unknot is required as a generator if the skein triangle is unoriented. For oriented skein triangles, we show that the unknot and Hopf link orientations form a set of generators.

\end{abstract}

\pagenumbering{arabic}
\section{Introduction}
Knots have long been appreciated for their aesthetic qualities, most notably in ancient Celtic art and design. However, a series of innovations in algebraic topology and combinatorics during the $20^{\text{th}}$ century revolutionized knots into an area of significant mathematical interest.

A knot is an embedding of the circle $\mathcal{S}^1$ into $\mathcal{S}^3$, the three-sphere. More generally, we refer to an intertwined collection of knots as a link, where these constituent knots are the \textit{components}. If no orientation on any component is specified, the link is \textit{unoriented}. However, if each component is assigned an orientation, the resulting link is \textit{oriented}. Links are often more effectively studied through their two-dimensional representations, known as \textit{link diagrams}, obtained by projecting the link onto a plane. The image of two overlapping strands under such a projection is a \textit{crossing} (Figure 1).

\begin{center}
\includegraphics[width = 1.5cm]{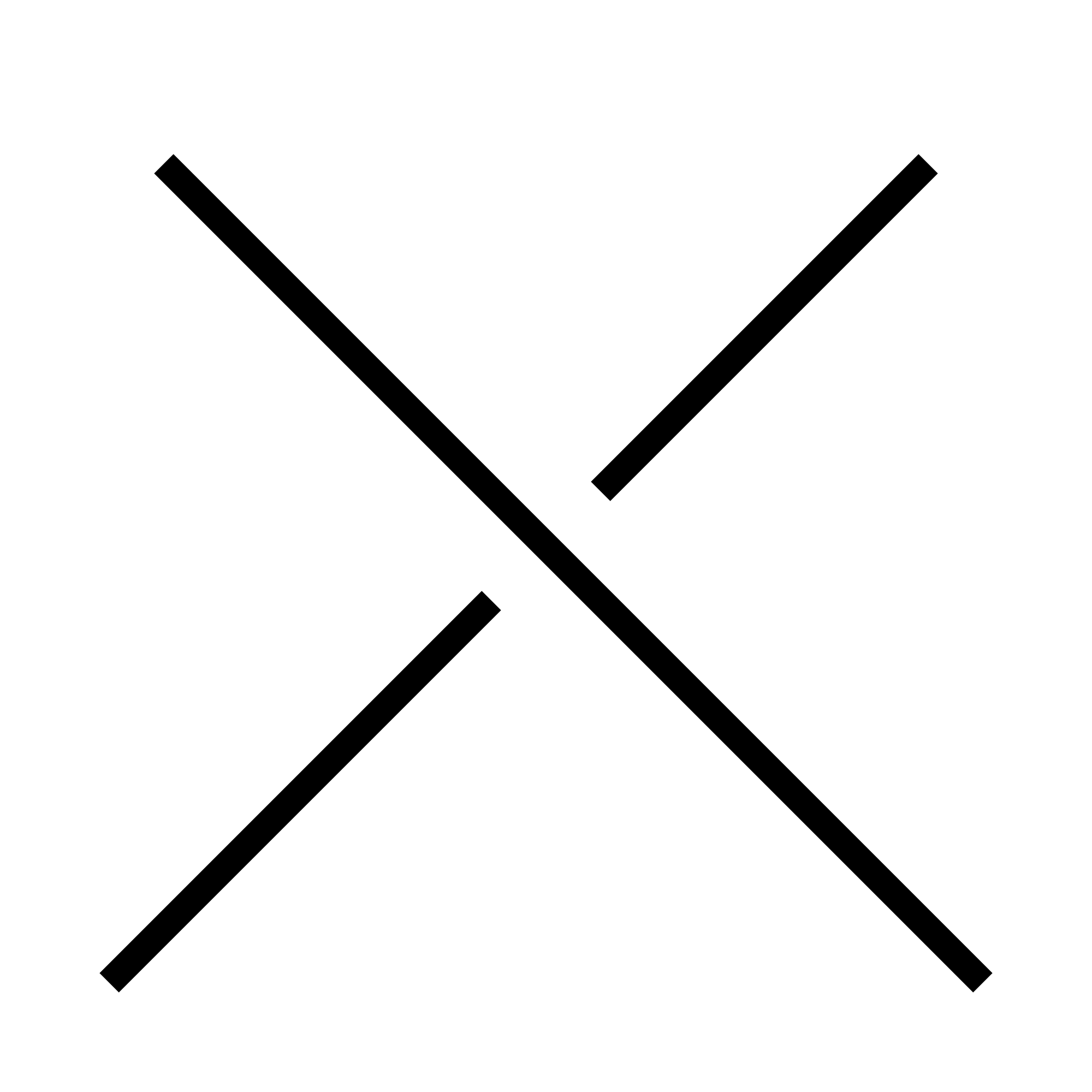}

Figure 1: Crossing
\end{center}

One of the driving objectives in knot theory has been determining whether two given links are equivalent, or \textit{isotopic}. Two links are isotopic if there exists a continuous deformation in $\mathcal{S}^3$ mapping one link to the other. In terms of link diagrams, Reidemeister showed that two links are isotopic if and only if their diagrams are related by a finite series of Reidemeister moves \cite{Lickorish}.

In practice, it is quite difficult to distinguish two links by directly proving no isotopy exists between them. Instead, it is much more feasible to show that the links have different \textit{link invariants}. A link invariant is any kind of mathematical object assigned to a link that remains unchanged under isotopy. This paper is mainly motivated by invariants defined solely for links with nonzero determinant, a quantity that will be defined shortly.

Discovered by J. W. Alexander in 1928, the Alexander polynomial is one of the earliest known link invariants, but it was originally defined in terms of advanced algebraic topological notions such as the knot group and homology \cite{Alexander}. However, with the advent of combinatorial methods in knot theory during the late $20^{\text{th}}$ century, mathematicians discovered that the Alexander polynomial, among other link invariants, could be more easily defined in terms of \textit{skein relations}. For a link invariant $\chi$, a skein relation relates the values of $\chi$ on a \textit{skein triangle}, which consists of three links whose diagrams are related through a geometric transformation near a single crossing. This paper focuses on two types of skein triangles: \textit{unoriented} and \textit{oriented}. 

The unoriented skein triangle, denoted $(L, L_0, L_1)$, is obtained by resolving a crossing from $L$ in two different ways (Figure 2a).
\begin{center}
\includegraphics[width = 7cm]{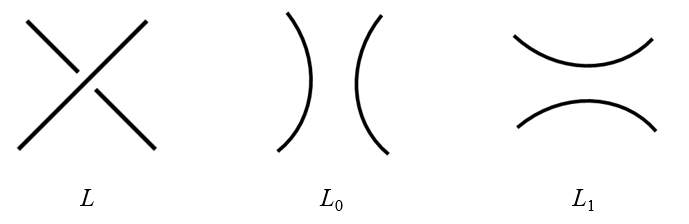}

Figure 2a: Unoriented skein triangle
\end{center}
The oriented skein triangle, denoted $(L_+, L_-, L_o)$, is a relation between oriented link diagrams obtained by a crossing change and the canonical orientation consistent resolution (Figure 2b).
\begin{center}
\includegraphics[width = 7cm]{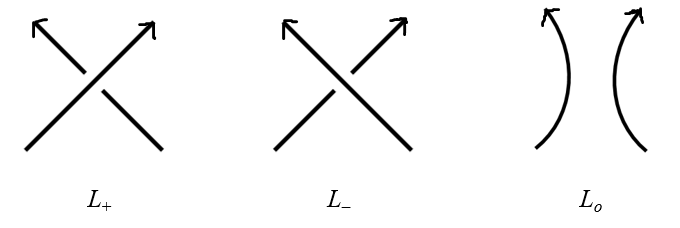}

Figure 2b: Oriented skein triangle
\end{center}
Examples of link invariants that satisfy skein triangles include the Alexander and Jones polynomials \cite{Lickorish}, the determinant, and the Casson-Walker invariant for branched double covers \cite{Mullins}.

If $\chi$ is defined for all links, we may recursively use the skein relation to completely determine $\chi$ given some initial values. The key question to ask is how many of these initial values we actually need. Without further restrictions, it is straightforward to see that only the values for the unknot (simple loop) and the unlinks (multiple unknots that are disjoint in the link diagram) are required, as any sequence of resolutions and crossing changes on a diagram will eventually result in a finite set of disjoint circles. The value of $\chi$ on two members of a skein triangle gives $\chi$ on the third, so we may repeatedly use the skein relation to obtain $\chi$ on all links. 

But some $\chi$ are more complex and only take values on certain types of links. In particular, we are concerned with \textit{nonzero determinant link invariants}, which, as the name suggests, are defined for links with nonzero \textit{determinant}. The determinant is obtained by evaluating the Alexander polynomial at $-1$ and taking the absolute value; however, we will examine a more elementary construction in Section $2$. Nonzero determinant link invariants, such as the Casson-Walker and Fr{\o}yshov invariants for branched double covers [6], are crucial to modern developments and applications of knot theory. Thus, our primary objective is to find a minimal set of initial values that completely determines such invariants on all nonzero determinant links by recursive application of a skein relation.

In $1993$, David Mullins proved the following theorem \cite{Mullins}: 

\begin{theorem}[Mullins]
Let $\chi$ be an invariant of non-zero determinant oriented links and suppose $\chi$ has
recursive formulas for the following triples of oriented links differing at a single crossing ($L_{u_i}$ is the non-canonical resolution of $L_+$ with appropriately chosen orientation): 

\[(L_+, L_-, L_o) \hspace{0.25cm} \text{whenever} 
\begin{array}{cc}
    \det(L_+) \\
     \det(L_-) \\ 
      \det(L_o) \\
\end{array} \Bigg\} \neq 0
\]

\[(L_+, L_-, L_{u_i}) \hspace{0.25cm} \text{whenever} 
\begin{array}{cc}
    \det(L_+) \\
     \det(L_-) \\ 
      \det(L_{u_i}) \\
\end{array} \Bigg\} \neq 0 \hspace{0.25cm} \text{and} \hspace{0.15cm} \det(L_o) = 0
\]

\[(L_+, L_o, L_{u_i}) \hspace{0.25cm} \text{whenever} 
\begin{array}{cc}
    \det(L_+) \\
     \det(L_o) \\ 
      \det(L_{u_i}) \\
\end{array} \Bigg\} \neq 0 \hspace{0.25cm} \text{and} \hspace{0.15cm} \det(L_-) = 0
\]

\[(L_-, L_o, L_{u_i}) \hspace{0.25cm} \text{whenever} 
\begin{array}{cc}
    \det(L_-) \\
     \det(L_o) \\ 
      \det(L_{u_i}) \\
\end{array} \Bigg\} \neq 0 \hspace{0.25cm} \text{and} \hspace{0.15cm} \det(L_+) = 0\]
Then $\chi$ is completely determined by these recursive formulas and its value on the
unknot. 
\end{theorem}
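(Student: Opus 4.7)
The plan is to proceed by strong induction on the lexicographic pair $(c(D), s(D))$, where $c(D)$ is the crossing number of a chosen diagram of $L$ and $s(D)$ is the minimum number of crossing changes needed to make $D$ descending. The base case handles $s(D) = 0$: a descending diagram represents an unlink, so $L$ is an unlink, and since an $n$-component unlink with $n \geq 2$ has zero determinant, the hypothesis $\det(L) \neq 0$ forces $L$ to be the unknot, on which $\chi$ is already specified.

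For the inductive step, assume $s(D) > 0$. I would pick a crossing of $D$ that must be switched in order to descend the diagram and consider the four associated oriented links $L_+, L_-, L_o, L_{u_i}$, where $L$ is one of $L_\pm$, say $L = L_+$. Switching the chosen crossing yields $L_-$ with the same crossing count but with $s$ decreased by $1$, while both resolutions $L_o$ and $L_{u_i}$ have strictly fewer crossings than $L$. Thus all three of $L_-, L_o, L_{u_i}$ are strictly simpler than $L_+$ in the lexicographic order, and the inductive hypothesis determines $\chi$ on each of them whenever their determinants are nonzero.

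The central technical fact, which I expect to be the main obstacle, is to verify that at most one of the four quantities $\det(L_+), \det(L_-), \det(L_o), \det(L_{u_i})$ can vanish. This should follow from the standard determinant skein relations, which up to sign read $\det(L_+) \pm \det(L_-) = 2\det(L_o)$ and $\det(L_+) = \det(L_o) \pm \det(L_{u_i})$; any two simultaneous vanishings among the four would force a third, contradicting $\det(L_+) = \det(L) \neq 0$. With this lemma in hand, the four cases in the hypothesis of the theorem exhaust precisely the possible vanishing patterns, so at least one valid skein triple is always available at the chosen crossing, expressing $\chi(L)$ in terms of $\chi$ on strictly simpler nonzero-determinant links. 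Iterating this reduction drives the argument down to the base case, where everything is computed from $\chi(\text{unknot})$, completing the proof.
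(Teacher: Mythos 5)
This theorem is not proved in the paper at all: it is quoted as background from Mullins \cite{Mullins}, and the paper's own contribution (Theorems \ref{main theorem unoriented} and \ref{thm: oriented theorem}) is proved by a quite different rational-tangle argument. So there is no internal proof to compare against; I can only assess your outline on its own terms. On those terms it is essentially Mullins' original strategy and it is sound: the double induction on the lexicographic pair (crossing number, number of crossing switches to reach a descending diagram) is the standard skein-theoretic induction, the base case correctly reduces to the unknot because a multi-component unlink is a split configuration and hence has determinant zero (Lemma \ref{lem: split config}), and the entire theorem does come down to the single claim that, given $\det(L_\pm)\neq 0$ for the link you are reducing, at most one of $\det(L_+),\det(L_-),\det(L_o),\det(L_{u_i})$ vanishes, so that one of the four hypothesized triples is always usable and involves only lexicographically smaller links.

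The one genuinely soft spot is that key claim. The identities you invoke, $\det(L_+)\pm\det(L_-)=2\det(L_o)$ and $\det(L_+)=\det(L_o)\pm\det(L_{u_i})$, hold only for suitably \emph{signed} determinants (e.g.\ normalized values of the Conway polynomial at $z=2i$); with the signs left unspecified and absolute values taken, the inference ``two vanishings force a third'' does not literally follow as written. The clean repair is already available in this paper: view the chosen crossing as a disk containing a rational tangle and apply Lemma \ref{lem: determinant tangle lemma}. The four links are $L(\frac{1}{1})$, $L(\frac{1}{-1})$, $L(\frac{0}{1})$, $L(\frac{1}{0})$, with determinants $|b-a|$, $|b+a|$, $|a|$, $|b|$ for fixed integers $a,b$ depending only on the exterior of the disk. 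Any two of $|b+a|$, $|a|$, $|b|$ vanishing forces $a=b=0$ and hence $\det(L_+)=|b-a|=0$, contradicting $\det(L)\neq 0$; the symmetric statement holds when $L=L_-$. With that substitution your case analysis exhausts all vanishing patterns and the induction closes, so the proposal is correct modulo this bookkeeping.
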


We will greatly simplify the conditions required to completely determine $\chi$ on both unoriented and oriented nonzero determinant links. To make this notion precise, we consider a subset $\mathcal{S}$ of all nonzero determinant links. 

\begin{definition}
$\mathcal{S}$ is \textit{closed under the unoriented skein triangle} if it is subject to the following condition: suppose three links each with nonzero determinant constitute an unoriented skein triangle. Then if any two of them belong to the set, so does the third.
One can define the notion of being \textit{closed under the oriented skein triangle} in a similar manner.
\end{definition}

We shall prove the following main theorems of this paper:

\begin{theorem}\label{main theorem unoriented}
Given a set $\mathcal{S}$ of unoriented nonzero determinant links, suppose that it is closed under the unoriented skein triangle and contains the unknot. Then it contains all nonzero determinant links. 
\end{theorem}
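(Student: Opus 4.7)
The plan is to induct on the crossing number $c(L)$ of a diagram of $L$. The base case $c(L) = 0$ consists of unlinks, among which only the unknot has nonzero determinant; this lies in $\mathcal{S}$ by hypothesis.

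For the inductive step, suppose every nonzero-determinant link with fewer than $c$ crossings lies in $\mathcal{S}$, and let $L$ be a nonzero-determinant link with $c \geq 1$ crossings. I would choose a crossing and form its unoriented skein triangle $(L, L_0, L_1)$ by the two resolutions, each of which has $c-1$ crossings. If $\det(L_0) \neq 0$ and $\det(L_1) \neq 0$, then the inductive hypothesis places $L_0, L_1 \in \mathcal{S}$, and since all three members of the triangle have nonzero determinant, the closure hypothesis on $\mathcal{S}$ forces $L \in \mathcal{S}$.

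The main obstacle is guaranteeing the existence of such a \emph{good} crossing. The determinant satisfies an unoriented skein-type formula (derivable via the Kauffman bracket at an appropriate root of unity, or equivalently via the Goeritz matrix) of the shape $\det(L) = \bigl|\det(L_0) \pm \det(L_1)\bigr|$, so at least one of the resolutions has nonzero determinant, but not necessarily both. To surmount this, I would prove an auxiliary structural lemma: any diagram of a nonzero-determinant link with at least one crossing admits a crossing at which both resolutions have nonzero determinant. The natural route is a Tait/Goeritz-graph argument — $\det(L)$ equals (up to sign) the determinant of an integer matrix built from a checkerboard coloring, and the two resolutions correspond to deletion and contraction of the corresponding edge; a \emph{bad} crossing is an edge whose deletion or contraction annihilates the matrix determinant (a bridge/loop-type edge), and a graph in which every edge is bad would be too degenerate for the overall matrix determinant to remain nonzero, producing a contradiction.

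If that structural lemma proves too delicate (for instance, owing to signed Goeritz entries or non-alternating diagrams), a fallback strategy is to induct jointly on the pair $(c(L), |\det(L)|)$ ordered lexicographically: at a bad crossing one of $L_0, L_1$ is still nonzero-determinant and strictly simpler, so the inductive hypothesis gives it in $\mathcal{S}$; by chaining two skein triangles through a carefully chosen auxiliary link whose determinant can be controlled, one bootstraps $L$ into $\mathcal{S}$. The central subtlety throughout is that the closure hypothesis applies only when \emph{all three} links in a skein triangle have nonzero determinant, so every reduction step must be engineered to avoid passing through a determinant-zero link.
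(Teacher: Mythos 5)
Your strategy---induct on crossing number and resolve one crossing of a fixed diagram---concentrates all of the difficulty of the theorem into the step you yourself flag, and neither of your proposed repairs closes the gap. When one of the two resolutions $L_0$, $L_1$ has determinant zero, the triangle $(L, L_0, L_1)$ is simply unusable: the closure hypothesis says nothing about it. Your ``structural lemma'' (every diagram of a nonzero-determinant link with a crossing admits a crossing at which both resolutions have nonzero determinant) is asserted rather than proved, and as literally stated it is false: in the one-crossing diagram of the unknot, one resolution of the kink is the two-component unlink, which has determinant zero, and the same happens at any nugatory crossing or any crossing whose resolution splits the diagram. Even restricted to reduced minimal diagrams, the existence of such a good crossing for non-alternating links is a delicate global question, and Remark 1.4 of the paper is a warning about precisely this kind of single-diagram, single-crossing recursion: the closely related quasi-alternating recursion of Ozsv\'ath--Szab\'o is known \emph{not} to reach all nonzero-determinant links. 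Your fallback (lexicographic induction on $(c(L), |\det(L)|)$, ``chaining two skein triangles through a carefully chosen auxiliary link whose determinant can be controlled'') restates the problem rather than solving it---producing that auxiliary link is the entire content of the theorem.

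The paper's proof avoids the issue by never committing to a single skein triangle. It decomposes $L$ into rational tangles and uses the fact (Lemma 2.11) that replacing a tangle $\frac{p}{q}$ by the one-parameter family $\frac{p'+mp}{q'+mq}$, $m \in \mathbb{Z}$, produces infinitely many unoriented skein triangles through $L$. Because the determinant of the resulting link is a linear expression $|bp-aq|$ in the inserted tangle (Lemma 3.2), at most one insertion---and, in the two-disk refinement, only finitely many (Lemmas 3.4 and 3.9)---can yield determinant zero, so one simply chooses $m$ large enough to dodge every bad case; the induction is then run on the number of components and the complexity of the tangle decomposition, not on crossing number. If you wish to salvage your approach, you need some mechanism that attaches to $L$ \emph{many} independent skein triangles with quantitative control on how many are spoiled by a vanishing determinant; that is exactly what the rational-tangle machinery supplies and what a single crossing of a single diagram cannot.
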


\begin{remark}(See \cite{Osvath} for more details.)
One can define a weaker version of being closed under the unoriented skein triangle by requiring that the two links belonging to the set $\mathcal{S}$ have determinant smaller than the third link in the skein triangle. This forms the set of quasi-alternating links defined by Ozsv\'ath-Szab\'o, an important family of links in modern knot theory. In particular, it is known  that the set of quasi-alternating links does not contain all links with nonzero determinant. Therefore, Theorem  \ref{main theorem unoriented} implies that Ozsv\'ath-Szab\'o's definition of quasi-alternating links is very delicate.
\end{remark}

Furthermore, if $\mathcal{S}$ is closed under the oriented skein relation, we must include the orientations of the Hopf link among our initial members:

\begin{theorem}\label{thm: oriented theorem}
Given a set $\mathcal{S}$ of oriented nonzero determinant links, suppose that it is closed under the oriented skein triangle and contains the unknot and all orientations of the Hopf link. Then it contains all nonzero determinant links.
\end{theorem}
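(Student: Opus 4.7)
The plan is to deduce Theorem~\ref{thm: oriented theorem} from Theorem~\ref{main theorem unoriented}. Given an oriented nonzero determinant link $L$, write $|L|$ for its underlying unoriented link. Theorem~\ref{main theorem unoriented} produces a finite sequence of unoriented skein triangles, each with three nonzero determinant vertices, that starts from the unknot and reaches $|L|$ by successively adjoining the third vertex of a triangle whose other two vertices are already present. The strategy is to promote this chain to an oriented chain of members of $\mathcal{S}$ terminating at $L$.

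The central technical step is a \emph{promotion lemma}: if $(K, K_0, K_1)$ is an unoriented skein triangle whose three vertices all have nonzero determinant, and every orientation of two of its vertices lies in $\mathcal{S}$, then every orientation of the third vertex also lies in $\mathcal{S}$. I plan to fix an orientation on the vertex to be promoted; this canonically orients one of the resolutions as the orientation-preserving resolution $L_o$, while the other resolution $L_{u_i}$ must be assigned a chosen orientation. When $\det L_o \neq 0$ and the crossing-change $L_-$ also has nonzero determinant, the oriented skein triangle $(L_+, L_-, L_o)$ applies directly and the promotion follows from oriented closure of $\mathcal{S}$. The hard case is when $\det L_o = 0$ or $\det L_- = 0$, so the standard oriented triangle is unavailable and one must detour through $L_{u_i}$.

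For the degenerate cases I plan to construct, for each obstructing crossing, an auxiliary sequence of oriented skein triangles with nonzero determinant vertices connecting the desired pair of links. Both Hopf link orientations will serve as seeds for these detours; intuitively both are required because a crossing change between strands of a two-component sublink reverses a local linking number, interchanging the two Hopf orientations. The main obstacle will be verifying that such a detour can always be arranged with nonzero determinant at every intermediate vertex. I expect this to require careful analysis of how the determinant changes under skein moves, together with a case analysis on how the components of $L_o$ and $L_{u_i}$ interact near the crossing. Once the promotion lemma is established, iterating along the chain supplied by Theorem~\ref{main theorem unoriented} delivers $L \in \mathcal{S}$.
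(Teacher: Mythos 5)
Your reduction to Theorem~\ref{main theorem unoriented} has a structural gap at its core. An unoriented skein triangle $(K, K_0, K_1)$ does not correspond to any single oriented skein triangle: once you orient $K = L_+$, the oriented triangle to which the closure hypothesis applies is $(L_+, L_-, L_o)$, where $L_-$ is the \emph{crossing change} and $L_o$ is whichever one of $K_0, K_1$ carries the induced orientation. The other resolution $L_{u_i}$ never appears in an oriented skein triangle with $L_+$ at all, so there is no legitimate ``detour through $L_{u_i}$'' using only oriented closure --- the triples $(L_+, L_-, L_{u_i})$, $(L_+, L_o, L_{u_i})$, $(L_-, L_o, L_{u_i})$ are exactly the extra relations Mullins had to \emph{assume} in Theorem 1.1, and the whole content of Theorem~\ref{thm: oriented theorem} is that one can do without them. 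Consequently your promotion lemma requires knowing $L_-$, a link that need not occur anywhere in the unoriented chain and whose determinant may vanish; the two cases you label ``degenerate'' ($\det L_o = 0$ or $\det L_- = 0$) are not edge cases but the entire difficulty, and the proposal only asserts an expectation that auxiliary triangles with nonzero determinant vertices can be arranged, without a mechanism for producing them. As written, the argument is not a proof.

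For comparison, the paper does not promote the unoriented chain; it re-runs the induction natively in the oriented setting. Links are presented as open configurations $O(x)$ with a rational tangle $x$ inserted into a disk, and a double induction on the \emph{lifting number} and the \emph{external crossing number} is carried out. The reason rational tangles save the day is Lemma~\ref{lem: oriented skein} together with the parity analysis of Lemma~\ref{lem: endpoint connection}: among the fractions $\frac{p}{q}$, $\frac{p'+(m+1)p}{q'+(m+1)q}$, $\frac{p'+(m-1)p}{q'+(m-1)q}$ one can choose the parity of $m$ so that two insertions are orientation compatible and differ by a crossing change whose oriented resolution is the third, yielding a genuine \emph{oriented} skein triangle; choosing $m$ large then avoids the at most one zero-determinant insertion (Lemma~\ref{lem: oriented zero det cond}). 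The Hopf link orientations enter concretely as the seed $O(\frac{1}{2})$ in the complexity-one base case (Lemma~\ref{lem: oriented complexity one}), not as generic seeds for unspecified detours. If you want to pursue your reduction, you would essentially have to rebuild this tangle machinery inside your promotion lemma, at which point the reduction buys nothing.
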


These theorems imply that for any link $L$ with nonzero determinant, there exists a sequence of nonzero determinant links $(L^0, L^1, ..., L^n)$ with $L^0, L^1 \in \mathcal{S}$ and $L^n = L$ such that for any $L^m$ with $2 \leq m \leq n$, there exist $L^i$ and $L^j$ satisfying $i, j < m$ forming a skein triangle with $L^m$. Thus, for a nonzero determinant link invariant $\chi$, if its values on $L^0$ and $L^1$ are known, it is completely determined:

\begin{corollary}
Let $\chi$ be an invariant of non-zero determinant unoriented links and suppose $\chi$ has a recursive formula for the following triple of links differing at a single crossing:
\[(L, L_0, L_1) \hspace{0.25cm} \text{whenever} 
\begin{array}{cc}
    \det(L) \\
     \det(L_0) \\ 
      \det(L_1) \\
\end{array} \Bigg\} \neq 0
\]
Then $\chi$ is completely determined by its value on the unknot.
\end{corollary}

\begin{corollary}
Let $\chi$ be an invariant of non-zero determinant oriented links and suppose $\chi$ has a recursive formula for the following triple of links differing at a single crossing:
\[(L_+, L_-, L_o) \hspace{0.25cm} \text{whenever} 
\begin{array}{cc}
    \det(L_+) \\
     \det(L_-) \\ 
      \det(L_o) \\
\end{array} \Bigg\} \neq 0
\]
Then $\chi$ is completely determined by its values on the unknot and all orientations of the Hopf link.
\end{corollary}

We establish basic properties of the link determinant and rational tangles (essential tool used in our proof) in Section $2$. In Section $3$, we begin by introducing key lemmas that relate the determinant to rational tangles and the unoriented skein triangle, subsequently proving Theorem $1$. In Section $4$, we conclude by proving Theorem $2$ using oriented modifications of the lemmas used in previous sections.

\section{Preliminaries}\label{sec:preliminaries}
\subsection{The Link Determinant}

We begin by examining the link determinant and its properties. Given a link $L$ and diagram with $k$ crossings, label the arcs with formal variables (or \textit{colors}) $c_1, c_2, ..., c_k$ in some order. Consider a crossing as shown in figure $3$. For a fixed prime $n$, we associate the colors with residues $(\bmod\; n)$ such that 
\[2c_p - c_q - c_r \equiv 0 \pmod{n}\]
\begin{center}
    \includegraphics[width = 2.5cm]{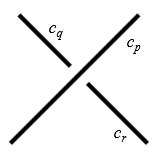}
    
    Figure 3: Labeled crossing
\end{center}
over all crossings. If there exists a non-monochromatic solution in $c_1, c_2, ..., c_k$, we say that $L$ is \textit{$n$-colorable}. It is straightforward to check that coloring is invariant under the three Reidemeister moves, so we can indeed refer to the colorability of $L$. Our system of $k$ equations in $k$ variables can be represented as a matrix equation with coefficient matrix $\mathcal{C}$, the \textit{coloring matrix} of the diagram.

Note that $\mathcal{C}$ has determinant $0$; by definition, each row has exactly one occurrence of $2$ and two occurrences of $-1$, so the column vectors sum to $0$. However, the coloring matrix has the interesting property that all minors of size $(k-1) \times (k-1)$ have the same determinant (see \cite{Kauffman}), motivating the definition of the link determinant:

\begin{definition}
The \textit{determinant} of link $L$, denoted $\det(L)$, is the absolute value of the determinant of the matrix obtained by removing a row and column from $\mathcal{C}$. As a convention, the unknot has determinant $1$.
\end{definition} 

This definition implicitly claims the determinant is a link invariant, the proof of which we will refer the reader to \cite{Kauffman}. With $\det(L)$, we are able to completely determine the colorability of $L$:

\begin{lemma}\label{lem: p-colorable}
 $L$ is $n$-colorable for $n$ prime if and only if $n$ divides $\det(L)$.
\end{lemma}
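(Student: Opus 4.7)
The plan is to pass to linear algebra over the field $\mathbb{F}_n = \mathbb{Z}/n\mathbb{Z}$ (using that $n$ is prime) and reinterpret $n$-colorability as a rank condition on the coloring matrix $\mathcal{C}$. First I would observe that since each row of $\mathcal{C}$ has entries summing to $2 - 1 - 1 = 0$, the all-ones vector $\vec{1} = (1, 1, \ldots, 1)^T$ lies in the kernel of $\mathcal{C}$ over $\mathbb{Z}$, hence also over $\mathbb{F}_n$. The monochromatic colorings are precisely the scalar multiples of $\vec{1}$, so they form a one-dimensional subspace of the null space of $\mathcal{C} \pmod n$. Therefore $L$ is $n$-colorable if and only if this null space has dimension at least $2$ over $\mathbb{F}_n$, equivalently if and only if the rank of $\mathcal{C} \pmod n$ is at most $k - 2$.

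Next, I would connect this rank condition to divisibility of $\det(L)$ by $n$. Since $\det(\mathcal{C}) = 0$, the rank of $\mathcal{C}$ over $\mathbb{F}_n$ is determined by its $(k-1) \times (k-1)$ minors: the rank is $k - 1$ if and only if at least one such minor is nonzero mod $n$, and is at most $k - 2$ if and only if every such minor vanishes mod $n$. Here I invoke the key fact already cited from Kauffman that every $(k-1) \times (k-1)$ minor of $\mathcal{C}$ has absolute value equal to $\det(L)$. Consequently, every such minor is divisible by $n$ if and only if $n \mid \det(L)$.

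Combining the two steps closes the equivalence: $L$ is $n$-colorable iff the null space of $\mathcal{C} \pmod n$ has dimension at least $2$ iff the rank of $\mathcal{C} \pmod n$ is at most $k-2$ iff every $(k-1) \times (k-1)$ minor vanishes mod $n$ iff $n \mid \det(L)$.

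The main conceptual obstacle is really the identification of "non-monochromatic solution" with a two-dimensional null space; I would want to state this carefully, noting that the primality of $n$ is essential because it lets us treat $\mathbb{F}_n$ as a field and talk about dimension and rank unambiguously. Everything else is just bookkeeping once the equal-minors property of $\mathcal{C}$ is taken as given.
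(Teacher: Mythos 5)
Your proposal is correct and follows essentially the same route as the paper: work over $\mathbb{F}_n$, identify $n$-colorability with the nullspace of $\mathcal{C}$ having dimension at least $2$ (equivalently rank at most $k-2$), and translate that rank condition into the simultaneous vanishing mod $n$ of all $(k-1)\times(k-1)$ minors via the equal-minors property. The only difference is presentational; your version states the rank-versus-minors equivalence a bit more symmetrically in both directions.
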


\begin{proof}
Work in $\mathbb{F}_n$, and suppose $L$ is $n$-colorable. Note that vectors of the form $a \cdot \overline{1}$ for all scalars $a$ are in the nullspace of $\mathcal{C}$, where $\overline{1}$ is the vector with all $1$'s. If  the nullity of $\mathcal{C}$ (denoted null($\mathcal{C}$)) is $1$, then these are the only vectors in the nullspace, corresponding to monochromatic colorings. Since we want nontrivial colorings, we must have null$(\mathcal{C}) \geq 2$, whence rank$(\mathcal{C}) = k - $null$(\mathcal{C}) \leq k - 2$. If there exists a minor of size $k-1$  with nonzero determinant, we obtain $k-1$ linearly independent rows, contradicting rank$(\mathcal{C}) \leq k - 2$. This forces $\det(L) \equiv 0 \pmod{n}$ if $L$ is $n$-colorable. Similarly in the reverse direction, $\det(L) \equiv 0 \pmod{n}$ implies all minors of size $(k-1) \times (k-1)$ have zero determinant, so rank$(\mathcal{C}) \leq k - 2$. Hence $L$ is $n$-colorable if and only if $n$ divides $\det(L)$. 
\end{proof}

In turn, we can use this lemma to help characterize the determinant:

\begin{lemma}\label{lem: knot odd determinant}
A link has odd determinant if and only if it is a knot. In particular, knots have nonzero determinant.
\end{lemma}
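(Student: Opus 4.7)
The plan is to deduce this lemma directly from Lemma \ref{lem: p-colorable} by specializing to the prime $n = 2$. Under that identification, $L$ has odd determinant if and only if $2 \nmid \det(L)$ if and only if $L$ is not $2$-colorable, so it suffices to show that $L$ is $2$-colorable precisely when $L$ has at least two components.

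The key observation is that mod $2$, the coloring relation at each crossing becomes
\[
2c_p - c_q - c_r \equiv c_q + c_r \equiv 0 \pmod{2},
\]
so $c_q \equiv c_r \pmod{2}$. That is, the two under-arcs at every crossing must receive the same residue mod $2$. Since the two under-arcs at a crossing always belong to the same link component (they are the two ends of the under-strand passing through that crossing), traversing any single component of $L$ never forces a change in its mod $2$ color. Consequently, every $2$-coloring of $L$ is constant on each component.

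First I would use this to handle the knot case: if $L$ has a single component, then every $2$-coloring is constant, hence monochromatic, so $L$ is not $2$-colorable and $\det(L)$ is odd (in particular nonzero). Next, I would handle the converse by explicitly constructing a non-monochromatic $2$-coloring when $L$ has at least two components: assign color $0 \pmod{2}$ to the arcs of one component and color $1 \pmod{2}$ to the arcs of another (and arbitrary values on the remaining components). The relation at every crossing is automatically satisfied because both under-arcs lie on the same component, so this is a valid non-monochromatic coloring, whence $L$ is $2$-colorable and $\det(L)$ is even.

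I do not expect any serious obstacle here; the only subtlety requiring care is the geometric fact that the two under-arcs at a crossing are necessarily on the same component, which underlies the implication that a mod $2$ coloring is determined by its values on components. Once that is cleanly stated, both directions of the biconditional fall out of Lemma \ref{lem: p-colorable}, and the ``nonzero determinant'' clause follows immediately since any odd integer is nonzero.
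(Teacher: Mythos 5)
Your proposal is correct and follows essentially the same route as the paper: reduce to $2$-colorability via Lemma \ref{lem: p-colorable}, note that mod $2$ the crossing relation forces the two under-arcs (which lie on one component) to share a color so that every $2$-coloring is constant on components, and conclude that knots admit only monochromatic colorings while multi-component links admit a non-monochromatic one by coloring components differently. Your version is slightly more explicit than the paper's about why a component-constant assignment satisfies every crossing relation, but the substance is identical.
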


\begin{proof}

If the link has more than one component, we may simply color entire components with residues $0$ or $1$ $(\bmod\; 2)$ with at least one of each color. Therefore, the link is 2-colorable, and by Lemma \ref{lem: p-colorable}, we conclude that it has even determinant. 

On the other hand, suppose a knot is $2$-colorable. Note that the two broken arcs adjacent in a crossing must be the same color. Travel along an arc, passing to the adjacent broken arc every time a crossing is reached. Each arc is the same color as the previous one, and since there is only one component, the path hits each arc in the diagram. However, this implies a monochromatic coloring, contradiction. Thus a knot is never $2$-colorable, so it always has odd determinant.
\end{proof}

The link determinant also behaves nicely with combining links. A trivial combination of links is known as a \textit{split configuration}:

\begin{definition}
A \textit{split configuration} is a link such that two of its component links may be separated to disjoint $3$-balls via an isotopy.
\end{definition}

\begin{lemma}\label{lem: split config}
A split configuration $L \cup L'$  has zero determinant. 
\end{lemma}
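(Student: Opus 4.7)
The plan is to exploit the block structure of the coloring matrix induced by the split diagram. Since $L$ and $L'$ live in disjoint $3$-balls, we may choose a diagram in which no crossing involves one arc from $L$ and one arc from $L'$. With an appropriate ordering of the arcs (all arcs of $L$ first, then all arcs of $L'$), the coloring matrix takes the block-diagonal form
\[
\mathcal{C}_{L\cup L'} \;=\; \begin{pmatrix} \mathcal{C}_L & 0 \\ 0 & \mathcal{C}_{L'} \end{pmatrix},
\]
because each crossing's defining equation $2c_p - c_q - c_r \equiv 0$ involves only arcs from a single component sublink.

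Next I would compute the determinant of an arbitrary $(k-1) \times (k-1)$ minor. By the definition of $\det(L\cup L')$, it suffices to evaluate one such minor; I would delete a row and column from the $\mathcal{C}_L$ block (the argument is symmetric for the other choice). The resulting matrix remains block-diagonal, with one block equal to the diminished $\mathcal{C}_L'$ and the other equal to the \emph{entire} matrix $\mathcal{C}_{L'}$. Hence its determinant factors as $\det(\mathcal{C}_L') \cdot \det(\mathcal{C}_{L'})$. But the excerpt already observes that any full coloring matrix has determinant zero, since its columns sum to the zero vector (each row contains exactly one $2$ and two $-1$'s). Therefore the product vanishes, and taking absolute values gives $\det(L\cup L') = 0$.

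The argument is really just linear algebra with a bookkeeping observation, so I do not expect any serious obstacle. The only mild subtlety is making sure the deleted row and column can be chosen from either block without affecting the conclusion, which is immediate from symmetry: whichever block loses a row and column, the opposite block survives intact and contributes its zero determinant as a factor. As a sanity check, one can recover the same conclusion from the nullspace viewpoint: both the all-ones vector $\overline{1}$ and the indicator vector of the arcs of $L$ lie in $\ker(\mathcal{C}_{L\cup L'})$ and are linearly independent, so $\mathrm{null}(\mathcal{C}_{L\cup L'}) \geq 2$, forcing $\mathrm{rank}(\mathcal{C}_{L\cup L'}) \leq k-2$ and hence vanishing of every $(k-1)\times(k-1)$ minor, exactly as in the proof of Lemma \ref{lem: p-colorable}.
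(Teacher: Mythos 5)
Your proof is correct and follows essentially the same route as the paper: write the coloring matrix of $L \cup L'$ in block-diagonal form, delete a row and column from one block, and observe that the untouched block is a full coloring matrix with vanishing determinant, so the minor factors as a product containing zero. The added nullspace sanity check is a nice touch but the core argument is identical to the paper's.
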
 

\begin{proof}
Let $L$ and $L'$ have coloring matrices $\mathcal{C}$ and $\mathcal{C}'$ respectively. Then it is straightforward to check that the coloring matrix for $L \cup L'$ is the block matrix

\[
M=
  \begin{bmatrix}
    \mathcal{C} & 0 \\
    0 & \mathcal{C}'
  \end{bmatrix}.
\]

We can remove the first row and column from $M$ to obtain $M'$ with reduced first block $\mathcal{D}$, so $\det(L \cup L') = \det(M') = \det(\mathcal{D}) \cdot \det(\mathcal{C}') =  \det(L) \cdot 0 = 0$. 
\end{proof}

The other combination of interest is the \textit{connected sum} of $L_1$ and $L_2$, denoted by $L_1 \# L_2$, which is obtained by breaking each link apart at a strand and connecting the loose ends to each other. Then the determinant acts multiplicatively:

\begin{proposition}\label{lem: connected sum multiply}
(Proposition $6.12$ in \cite{Lickorish}): For any two links $L_1$ and $L_2$, $\det(L_1 \# L_2) = \det(L_1) \cdot  \det(L_2)$.
\end{proposition}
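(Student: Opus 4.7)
The plan is to set up the coloring matrix $M$ of $L_1\#L_2$ directly from those of $L_1$ and $L_2$ and evaluate a well-chosen minor via Laplace expansion. Fix diagrams of $L_1, L_2$ with coloring matrices $C_1, C_2$ of sizes $k_1\times k_1$ and $k_2\times k_2$. Choose arcs $a\in L_1$ and $b\in L_2$ with undercrossing endpoints at crossings $X$ and $Y$, and form $L_1\#L_2$ by cutting $a$ between $X$ and the next crossing along $a$ (so the short segment from $X$ to the cut meets no other crossings), cutting $b$ analogously near $Y$, and joining the two short halves by one bridge (producing a new arc $d_2$) and the long halves by another (producing $d_1$). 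Since the bridges introduce no new crossings, $L_1\#L_2$ has $k_1+k_2$ crossings and arcs.

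With $L_1$-crossings on top and $L_2$-crossings on bottom, and with columns ordered $(A_1\mid d_1\mid d_2\mid A_2)$ where $A_i$ collects the arcs of $L_i$ other than $a$ or $b$, the coloring matrix of $L_1\#L_2$ takes the form
\[
M = \begin{pmatrix} A_1 & v_{a,1} & v_{a,2} & 0 \\ 0 & v_{b,1} & v_{b,2} & A_2 \end{pmatrix}.
\]
The strategic cuts guarantee that $v_{a,2}$ has a single nonzero entry ($-1$ at row $X$) and $v_{b,2}$ has only $-1$ at row $Y$; moreover, at every $L_1$-crossing other than $X$, the $d_1$-column entry matches the corresponding $a$-entry of $C_1$. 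I would then evaluate $\det(L_1\#L_2)$ via the $(k_1+k_2-1)$-minor $M_0$ obtained by deleting the $d_2$-column and the $X$-row of $M$. After these deletions the leftmost $k_1-1$ columns are zero in every $L_2$-row, so $M_0$ is block upper-triangular, with the top-left block equal to $C_1$ with the $X$-row and $a$-column removed (a standard $(k_1-1)$-minor of $C_1$, of absolute determinant $\det(L_1)$) and the bottom-right block equal to $[v_{b,1}\mid A_2]$, of size $k_2\times k_2$.

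To handle the bottom-right block I would use multilinearity of the determinant in the first column together with the identity $\det(C_2)=0$ (a consequence of the row-sum relation $2+(-1)+(-1)=0$). Writing the $b$-column of $C_2$ as $v_{b,1}+v_{b,2}$ gives $\det([v_{b,1}\mid A_2])=-\det([v_{b,2}\mid A_2])$, and since $v_{b,2}$ has only one nonzero entry, Laplace expansion along its column reduces to a standard $(k_2-1)$-minor of $C_2$ with absolute value $\det(L_2)$. Multiplying the two diagonal blocks of $M_0$ yields $|\det(M_0)|=\det(L_1)\det(L_2)$, which by definition equals $\det(L_1\#L_2)$. I expect the main obstacle to be verifying the block structure---specifically, that a cut near an undercrossing endpoint really does force the $d_2$-column to have a single nonzero entry in each of the $L_1$- and $L_2$-row blocks; once this combinatorial bookkeeping is in place, the block-triangular form and the final computation follow cleanly.
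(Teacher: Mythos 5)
Your argument is correct, but note that the paper does not actually prove this proposition: it is imported wholesale as Proposition~6.12 of Lickorish, where it follows from multiplicativity of the Alexander/Conway polynomial under connected sum (evaluate at $-1$). What you have written is a self-contained elementary proof inside the paper's own coloring-matrix framework, and it is the natural companion to the paper's proof of the split-configuration lemma, which also exploits the block structure of $\mathcal{C}$ for a disjoint union; the difference is that the connected sum couples the two blocks through the two new arcs $d_1,d_2$, and your choices (cutting each arc immediately after its undercrossing endpoint, then deleting the $d_2$-column and the $X$-row) are exactly what decouples them again into a block-triangular minor. The computation itself is sound: $|\det A_1'|=\det(L_1)$ is a standard minor of $C_1$, and the trick of writing the $b$-column of $C_2$ as $v_{b,1}+v_{b,2}$, using $\det(C_2)=0$ to trade $[v_{b,1}\mid A_2]$ for $[v_{b,2}\mid A_2]$, and expanding along the single $-1$ at row $Y$ correctly produces a standard minor of $C_2$. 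Two small points of bookkeeping you should make explicit: (i) the cut points can always be placed adjacent to $X$ and $Y$ on the outer faces of the two diagrams (puncture the sphere in a face bordered by $a$ just after $X$, and likewise for $b$), so the bridges genuinely add no crossings and $v_{a,2},v_{b,2}$ each have a single nonzero entry as claimed; and (ii) degenerate diagrams (an arc with no undercrossing endpoint, or a zero-crossing unknot summand) must first be normalized by a Reidemeister move, which is harmless since the determinant is already known to be a diagram-independent invariant. With those remarks your proof is complete, and it buys something the citation does not: a proof requiring nothing beyond the elementary definition of $\det$ used throughout Section~2, rather than the Alexander polynomial machinery.
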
 
\begin{flushleft}
These above properties help us better characterize links with zero determinant.
\end{flushleft}
\subsection{Rational Tangles}\label{subsec: rational tangles}
A $n$-tangle is a proper embedding of a disjoint union of $n$ arcs in the $3$-ball. We specifically focus on a family of $2$-tangles known as \textit{rational tangles}. Rational tangles provide a method of assigning rational numbers to certain sub-configurations of links. These configurations act as building blocks for links, giving us a powerful algebraic perspective highly compatible with the determinant. Following the construction from \cite{Mellor and Nevin}, a rational tangle is any tangle obtained by repeatedly twisting adjacent endpoints of the trivial tangles (two parallel horizontal or vertical strands) to create crossings. In particular, we can define the \textit{elementary rational tangles}, denoted by $[n]$ and $\frac{1}{[n]}$ for $n \in \mathbb{Z}$, referring to $|n|$ half-twists (right-handed twist if $n$ is positive and left-handed if $n$ is negative) performed on the horizontal and vertical trivial tangles respectively (Figure 4). We can also define the tangle operations $+$ and $*$ (Figure 5), which respectively merge tangles horizontally and vertically. Note that $+$ and $*$ are commutative up to isotopy, so these operations allow us to describe any rational tangle as the result of iteratively adjoining the elementary tangles to each other.

Indeed, let $T_0$ denote the choice of initial trivial tangle (either $[0]$ or $\frac{1}{[0]}$), and consider the following steps:

\begin{enumerate}
\item $T_i = T_{i-1} + [a_i]$ for some nonzero $a_i \in \mathbb{Z}$.

\item $T_i = T_{i-1} * \frac{1}{[a_i]}$ for some nonzero $a_i \in \mathbb{Z}$.
\end{enumerate}
\begin{center}
    \includegraphics[width = 12cm]{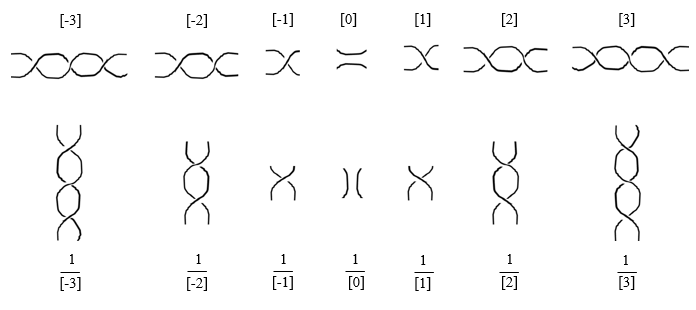}
    
    Figure 4: Rational tangles $[n]$ and $\frac{1}{[n]}$ for $-3 \leq n \leq 3$.
\end{center}

\begin{center}
    \includegraphics[width = 12cm]{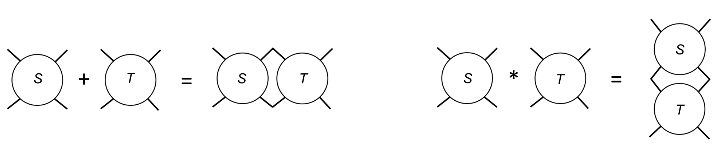}
    
    Figure 5: Tangle operations
\end{center}

Then we alternate between the two steps, beginning with the former if $T_0 = [0]$ and  the latter if $T_0 = \frac{1}{[0]}$. For a choice of nonzero integers $(a_1, a_2, ..., a_n)$ we either get the tangle

\[\bigg(\bigg([a_1]*\frac{1}{[a_2]}\bigg) + [a_3] \bigg) * ... \hspace{0.2cm} \text{or} \hspace{0.2cm} \bigg(\bigg(\frac{1}{[a_1]} + [a_2]\bigg) * \frac{1}{[a_3]} \bigg) + ... \]

But since $[0] + T = T = \frac{1}{[0]}*T$ for any tangle $T$, we can combine the above expressions by allowing $a_1$ to equal $\frac{1}{0}$ and $a_n$ to equal $0$, yielding

\[\bigg(...\bigg(\bigg([a_1]*\frac{1}{[a_2]}\bigg) + [a_3] \bigg)* ... * \frac{1}{[a_{n-1}]} \bigg) + [a_n]\]
for an appropriate choice of $a_1, ..., a_n$ and odd positive integer $n$. We represent this resulting rational tangle by its \textit{fraction}:

\[ (a_1, a_2, ..., a_n) = a_n + \frac{1}{a_{n-1} + \frac{1}{... + \frac{1}{a_2 + \frac{1}{a_1}}}}.\]

As one would hope for, the fraction uniquely determines a rational tangle, as claimed in \cite{Mellor and Nevin}:

\begin{proposition} Two rational tangles are isotopic if and only if they have the same fraction.
\end{proposition}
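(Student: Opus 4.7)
The plan is to prove both directions of the equivalence separately: first that isotopic rational tangles have the same fraction (so the map $T \mapsto F(T)$ is a well-defined invariant), and then that tangles with equal fraction are isotopic. The cleanest route is to introduce a second, manifestly isotopy-invariant quantity and show it agrees with the combinatorial continued-fraction assignment already described.

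For the forward direction, I would follow the Kauffman–Lambropoulou approach and define an invariant $F(T)$ via the Kauffman bracket polynomial applied to the two natural closures of a $2$-tangle: the numerator closure $N(T)$ and the denominator closure $D(T)$. Setting $F(T)$ equal to (a suitable normalization of) the ratio $\langle N(T)\rangle / \langle D(T)\rangle$ evaluated at the appropriate value of the bracket variable produces an element of $\mathbb{Q}\cup\{\infty\}$, and since the bracket ratio is invariant under the isotopies of $T$ that are permitted inside the $3$-ball, $F$ is an isotopy invariant by construction. Then verify the three recursive identities $F([0]) = 0$, $F(1/[0]) = \infty$, $F(T + [1]) = F(T) + 1$, and $F(1/T) = 1/F(T)$ by a direct skein computation on the bracket. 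These match exactly the recursion defining the continued fraction in the excerpt, so $F(T)$ equals $(a_1,\dots,a_n)$ for any continued-fraction construction of $T$. In particular, the fraction is independent of which sequence $(a_1,\dots,a_n)$ one uses and is preserved by all isotopies.

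For the reverse direction, I would first show that every element of $\mathbb{Q}\cup\{\infty\}$ is achieved by some rational tangle: given $p/q$ in lowest terms, the Euclidean algorithm produces a continued-fraction expansion $(a_1,\dots,a_n)$ whose associated tangle has fraction $p/q$. Next, I would show that any two continued-fraction expansions of the same rational number can be converted into one another via elementary identities such as absorbing unit partial quotients, reversing signs at the ends, and collapsing zero entries. Each such algebraic identity is realized geometrically by an explicit isotopy of rational tangles (flypes, twist cancellations, and the trivial identities $[0] + T = T = \frac{1}{[0]} * T$). Concatenating these isotopies exhibits an explicit isotopy between any two rational tangles realizing the same fraction, completing the proof.

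The main obstacle is the forward direction. The combinatorial definition of the fraction is tied to the specific inductive sequence $(a_1,\dots,a_n)$ used to build the tangle, and two very different sequences can produce isotopic tangles; proving directly that all such sequences give the same rational number is essentially equivalent to the theorem itself and would require an unpleasant case analysis over every tangle isotopy move. Importing an external invariant (the bracket ratio above, or equivalently the order and torsion presentation of $H_1$ of the double branched cover, which is a lens space $L(p,q)$) is what makes the argument go through cleanly; the rest of the proof is bookkeeping with continued fractions.
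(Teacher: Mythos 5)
The paper does not actually prove this proposition: it is quoted as a known result (Conway's theorem) with a citation to Mellor--Nevin, so there is no internal argument to compare yours against. That said, your sketch is the standard Kauffman--Lambropoulou proof, and its overall architecture is sound: an external, manifestly isotopy-invariant quantity (the normalized bracket ratio $\langle N(T)\rangle/\langle D(T)\rangle$, or equivalently the lens space $L(p,q)$ arising as the double branched cover) certifies the forward direction, and continued-fraction combinatorics plus explicit isotopies (flypes, twist absorption, the identities $[0]+T = T = \frac{1}{[0]}*T$) handle the converse. This is exactly the route one would take, and it is consistent with how the rest of the paper uses the fraction (e.g.\ Lemma \ref{lem: determinant tangle lemma} likewise passes through the branched double cover).

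Two caveats if you intend this as more than an outline. First, in the forward direction the invariance of the bracket ratio is only half the work: you must also verify that the ratio transforms correctly under \emph{both} tangle operations used in the paper's construction, i.e.\ $F(T+[k]) = F(T)+k$ \emph{and} $F(T*\frac{1}{[k]}) = 1/(k + 1/F(T))$, since the paper builds rational tangles by alternating $+[a_i]$ and $*\frac{1}{[a_i]}$; checking only $T+[1]$ and the formal inversion $1/T$ leaves a gap, because $1/T$ (rotation by $90^\circ$) is not literally one of the generating operations. Second, and more seriously, the converse step you describe as ``bookkeeping'' --- that any two continued-fraction expansions of the same rational are connected by moves each of which is realized by an isotopy --- is the genuinely hard combinatorial core of Conway's theorem. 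The standard resolution is to prove every rational tangle is isotopic to a canonical alternating form (all $a_i$ of the same sign) and then invoke uniqueness of such expansions; asserting the general statement for arbitrary mixed-sign expansions without reducing to canonical form is where an ``unpleasant case analysis'' would actually be unavoidable. As a blind reconstruction of a cited classical theorem your proposal is the right one, but those two steps carry essentially all of the content.
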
 

We can now identify any rational tangle with a fraction $\frac{p}{q} \in \mathbb{Q}^+ = \mathbb{Q} \cup \left\{\frac{1}{0}\right\}$. Rational tangles in fact appear as sub-configurations in a link, which we distinguish by drawing a disk around the tangle in the link diagram representation, intersecting the tangle at $4$ endpoints. There is a useful relationship between the parities of the numerator and denominator of a rational tangle and the way the tangle connects the endpoints of the disk. This relationship helps us better characterize how the various strands of a link are connected.

\begin{lemma}\label{lem: endpoint connection}
Let endpoints be labeled $a, b, c,$ and $d$ as in figure 6, and consider a rational tangle $\frac{p}{q}$ to be inserted into the disk. Then the following statements are true:
\begin{itemize}
\item If $p$ is even and $q$ is odd, $a$ is connected to $b$ and $c$ is connected to $d$.
\item If $p$ is odd and $q$ is even, $a$ is connected to $c$ and $b$ is connected to $d$.
\item If $p$ is odd and $q$ is odd, $a$ is connected to $d$ and $b$ is connected to $c$. 
\end{itemize}
\end{lemma}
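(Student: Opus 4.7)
The plan is to induct on the length $n$ of the continued-fraction sequence $(a_1, \ldots, a_n)$ used to build the tangle; by the construction recalled above, every rational tangle arises in this way.

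For the base cases one inspects the elementary tangles drawn in Figure 4. The tangle $[n]$ has fraction $n/1$: its diagram connects $a$ to $b$ and $c$ to $d$ when $n$ is even, and connects $a$ to $d$ and $b$ to $c$ when $n$ is odd, matching the first and third rows of the lemma. A symmetric inspection of $\frac{1}{[n]}$, which has fraction $1/n$, covers the remaining possibilities. Note that $\gcd(p,q) = 1$ excludes the fourth parity combination (both $p$ and $q$ even), so the three listed cases are exhaustive.

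For the inductive step, suppose the lemma holds for a rational tangle $T$ with fraction $p/q$, and consider the two ways of extending $T$. If $T' = T + [a]$, then $T'$ has fraction $(p+aq)/q$, so $q$ keeps its parity and the numerator's parity flips precisely when $a$ and $q$ are both odd. Geometrically, this operation attaches $|a|$ horizontal half-twists on the right of $T$, fixing the two left endpoints of $T$ and either preserving or swapping the two right endpoints of $T$ according to the parity of $a$. Running each of the three allowed parity classes of $p/q$ through this local surgery and tracking which endpoints of $T'$ become connected confirms exactly the endpoint configuration predicted by the lemma for $(p+aq)/q$. The case $T' = T * \frac{1}{[a]}$ is handled symmetrically: the new fraction is $(ap+q)/p$, the vertical twist stack acts on the two bottom endpoints of $T$ while fixing the top two, and the three parity classes again align with the prediction.

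The main subtlety is keeping endpoint labels and twist orientations consistent across each gluing so that the parity arithmetic faithfully reflects the diagrammatic picture; in particular, one must check that neither operation ever produces a closed loop inside the tangle disk, so that the endpoint pairing is well defined. Once this bookkeeping is fixed, the verification reduces to a handful of routine case checks, one for each (operation, parity class) pair.
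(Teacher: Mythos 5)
Your approach --- induction on the length of the continued-fraction expansion, checking the elementary tangles as base cases and tracking how each of the two gluing operations simultaneously changes the fraction's parity class and the endpoint pairing --- is essentially the paper's proof. The paper merely packages the bookkeeping you describe as ``routine case checks'' more compactly: it works in $\mathbb{F}_2$ and observes that adjoining $[k]$ or $\frac{1}{[k]}$ acts on the reduced fraction vector $\binom{p}{q}$ and on a three-valued ``connectivity vector'' by the \emph{same} matrices $\left(\begin{smallmatrix}1 & k\\ 0 & 1\end{smallmatrix}\right)$ and $\left(\begin{smallmatrix}1 & 0\\ k & 1\end{smallmatrix}\right)$, so the two quantities stay equal mod $2$ throughout the construction. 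That device is exactly your ``(operation, parity class)'' case analysis done once and for all.

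One concrete error to fix: for the vertical extension you write that $T' = T * \frac{1}{[a]}$ has fraction $(ap+q)/p$. With the paper's conventions the correct fraction is $\frac{1}{a + \frac{1}{p/q}} = \frac{p}{ap+q}$ --- you have written its reciprocal. This is not harmless for your argument, since taking the reciprocal swaps the parity classes $(\text{even},\text{odd})$ and $(\text{odd},\text{even})$, i.e.\ it interchanges the predictions ``$a$--$b$, $c$--$d$'' and ``$a$--$c$, $b$--$d$''; with the formula as you state it, the case checks in the $*$ step would \emph{not} align with the lemma. With the corrected fraction $\frac{p}{ap+q}$ (equivalently, the matrix $\left(\begin{smallmatrix}1 & 0\\ a & 1\end{smallmatrix}\right)$ acting on $\binom{p}{q}$), your geometric description of the vertical twists acting on two endpoints while fixing the other two does match the parity arithmetic, and the rest of the argument goes through as in the paper.
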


\begin{center}
\includegraphics[width = 3cm]{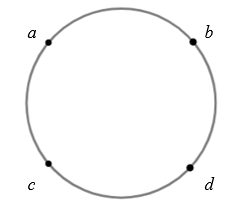}

Figure 6: Disk with endpoints
\end{center}

\begin{proof}
A rational tangle $\frac{p}{q}$ can be represented by the continued fraction
\[a_n + \frac{1}{a_{n-1} + \frac{1}{... + \frac{1}{a_2 + \frac{1}{a_1}}}} = \frac{p}{q},\]
where $n$ is odd, $a_1 \in \mathbb{Z}\setminus{\left\{0\right\}} \cup \left\{\frac{1}{0}\right\}$, $a_2, ..., a_{n-1} \in \mathbb{Z}\setminus{\left\{0\right\}}$, and $a_n \in \mathbb{Z}$. We define a sequence of \textit{truncated continued fractions} $\big(\frac{p_k}{q_k}\big)_{1 \leq k \leq n}$ by the expressions
\[\frac{p_{2i-1}}{q_{2i-1}} = a_{2i-1} + \frac{1}{a_{2i-2} + \frac{1}{... + \frac{1}{a_2 + \frac{1}{a_1}}}} \hspace{0.25cm} \text{and} \hspace{0.25cm} \frac{p_{2j}}{q_{2j}} = \frac{1}{a_{2j} + \frac{1}{a_{2j-1} + \frac{1}{... + \frac{1}{a_2 + \frac{1}{a_1}}}}}\]
for $1 \leq i \leq \frac{n+1}{2}$ and $1 \leq j \leq \frac{n-1}{2}$. If $\mathbb{Q}_*^+$ is the set of reduced fractions in $\mathbb{Q}^+$, consider the map $\psi:\mathbb{Q}_*^+ \longrightarrow \mathbb{Z} \times \mathbb{Z}$ sending the fraction $\frac{a}{b}$ to the vector $\binom{a}{b}$. Observe that this vector representation yields a recursive relation between the truncated continued fractions, namely
\[\left(
\begin{array}{c}
p_{2i}\\
q_{2i}\\
\end{array}
\right) = \left( {\begin{array}{cc}
   1 & 0 \\
   a_{2i} & 1 \\
  \end{array} } \right) \left(
\begin{array}{c}
p_{2i-1}\\
q_{2i-1}\\
\end{array}
\right) \hspace{0.25cm} \text{and} \hspace{0.25cm} \left(
\begin{array}{c}
p_{2i+1}\\
q_{2i+1}\\
\end{array}
\right) = \left( {\begin{array}{cc}
   1 & a_{2i+1} \\
   0 & 1 \\
  \end{array} } \right) \left(
\begin{array}{c}
p_{2i}\\
q_{2i}\\
\end{array}
\right).
\]
Thus, we may write
\[\left(
\begin{array}{c}
p\\
q\\
\end{array}
\right) = \left( {\begin{array}{cc}
   1 & a_n \\
   0 & 1 \\
  \end{array} } \right) \left( {\begin{array}{cc}
   1 & 0 \\
   a_{n-1} & 1 \\
  \end{array} } \right)...\left( {\begin{array}{cc}
   1 & 0 \\
   a_2 & 1 \\
  \end{array} } \right) \left(
\begin{array}{c}
p_1\\
q_1\\
\end{array}
\right).\]

Recall the construction for the rational tangle $\frac{p}{q}$ is given by 
\[\bigg(...\bigg(\bigg([a_1]*\frac{1}{[a_2]}\bigg) + [a_3] \bigg)* ... * \frac{1}{[a_{n-1}]} \bigg) + [a_n].\]
To each rational tangle $T$ we assign a \textit{connectivity vector} $\vec{c}_T \in \left\{\binom{0}{1}, \binom{1}{0}, \binom{1}{1}\right\}$ such that $\vec{c}_T = \binom{0}{1}$ if $T$ connects $a$ to $b$ and $c$ to $d$, $\vec{c}_T = \binom{1}{0}$ if $T$ connects $a$ to $c$ and $b$ to $d$, and $\vec{c}_T = \binom{1}{1}$ if $T$ connects $a$ to $d$ and $b$ to $c$. Working in $\mathbb{F}_2$, we can relate $\vec{c}_T$ to the connectivity vector of the tangle obtained by adjoining a elementary rational tangle to $T$ using
\[\vec{c}_{T*\frac{1}{[k]}} = \left( {\begin{array}{cc}
   1 & 0 \\
   k & 1 \\
  \end{array} } \right)\vec{c}_T \hspace{0.25cm} \text{and} \hspace{0.25cm} \vec{c}_{T+\frac{[k]}{1}} = \left( {\begin{array}{cc}
   1 & k \\
   0 & 1 \\
  \end{array} } \right)\vec{c}_T.\]
The first expression holds because adjoining $\frac{1}{[k]}$ through the operation $*$ either preserves the existing connectivity or swaps whether $a$ is connected to $c$ or $d$. When $k$ is even, the connectivity is preserved, so $\left( {\begin{array}{cc}
   1 & 0 \\
   k & 1 \\
  \end{array} } \right)$, which is equivalent to $\left( {\begin{array}{cc}
   1 & 0 \\
   0 & 1 \\
  \end{array} } \right)$ in $\mathbb{F}_2$, acts as the identity. When $k$ is odd, adjoining $\frac{1}{[k]}$ swaps $c$ and $d$. If $\vec{c}_T = \binom{0}{1}$, $T$ connects $a$ to $b$ and $c$ to $d$, so the swap does not affect this connectivity. Otherwise, T connects $a$ to $c$ or $d$, which becomes connected to $d$ or $c$ respectively after the swap. These transformations are reflected by the following computations:
  \[\left( {\begin{array}{cc}
   1 & 0 \\
   1 & 1 \\
  \end{array} } \right) \left(
\begin{array}{c}
0\\
1\\
\end{array}
\right) = \left(
\begin{array}{c}
0\\
1\\
\end{array}
\right), \hspace{0.5cm} \left( {\begin{array}{cc}
   1 & 0 \\
   1 & 1 \\
  \end{array} } \right) \left(
\begin{array}{c}
1\\
0\\
\end{array}
\right) = \left(
\begin{array}{c}
1\\
1\\
\end{array}
\right), \hspace{0.25cm} \text{and} \hspace{0.25cm} \left( {\begin{array}{cc}
   1 & 0 \\
   1 & 1 \\
  \end{array} } \right) \left(
\begin{array}{c}
1\\
1\\
\end{array}
\right) = \left(
\begin{array}{c}
1\\
0\\
\end{array}
\right).\]
The expression relating $\vec{c}_{T+\frac{[k]}{1}}$ to $\vec{c}_T$ follows from a similar argument, so we may write
\[\vec{c}_{\, \frac{p}{q}} = \left( {\begin{array}{cc}
   1 & a_n \\
   0 & 1 \\
  \end{array} } \right) \left( {\begin{array}{cc}
   1 & 0 \\
   a_{n-1} & 1 \\
  \end{array} } \right)...\left( {\begin{array}{cc}
   1 & 0 \\
   a_2 & 1 \\
  \end{array} } \right)\vec{c}_{[a_1]}\]
in $\mathbb{F}_2$. Since $[a_1]$ is an elementary rational tangle, $\vec{c}_{[a_1]} = \binom{p_1}{q_1}$, whence
\[\vec{c}_{\, \frac{p}{q}} = \left( {\begin{array}{cc}
   1 & a_n \\
   0 & 1 \\
  \end{array} } \right) \left( {\begin{array}{cc}
   1 & 0 \\
   a_{n-1} & 1 \\
  \end{array} } \right)...\left( {\begin{array}{cc}
   1 & 0 \\
   a_2 & 1 \\
  \end{array} } \right) \left(
\begin{array}{c}
p_1\\
q_1\\
\end{array}
\right) = \left(
\begin{array}{c}
p\\
q\\
\end{array}
\right),\]
as desired.
\end{proof}

Any given link $L$ can be trivially decomposed as a network of connected rational tangles; indeed, each crossing of its diagram can be considered as either the $\frac{1}{1}$ or $\frac{1}{-1}$ tangle. Thus we can view links in terms of the disks around the rational tangles, bringing us to the following definition:

\begin{definition}
A \textit{decomposition} of a link diagram is a representation of the diagram as a nexus of disks, each containing a rational tangle, such that every endpoint of a disk is connected to another and any crossing is contained within a disk (Figure 7).

\begin{center}
    \includegraphics[width = 6cm]{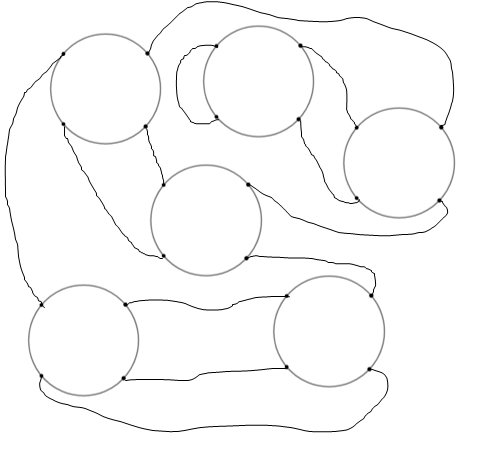}
    
    Figure 7: Decomposition of a link diagram with six disks, each containing a rational tangle.
\end{center}

\end{definition}
\begin{flushleft}
Of course we have the trivial decomposition, but we may obtain a more substantial notion:
\end{flushleft}
\begin{definition}
The \textit{complexity} of a link diagram is the minimal number of disks needed in its decomposition. The \textit{complexity} of a link is the minimal complexity among all its link diagrams.
\end{definition}

\begin{flushleft}
For the rest of the paper, we shall view links through the lens of the rational tangles they contain.
\end{flushleft}

\subsection{Skein Triangles and Rational Tangles}

Using the previous framework, we now provide an algebraic version of the unoriented and oriented skein triangle. By virtue of how rational tangles are constructed from elementary half-twists, they are extremely compatible with the skein triangles. Resolutions simplify the number of half-twists in an elementary tangle, giving us the following elegant relation:

\begin{lemma}\label{lem: unoriented skein}
If $\frac{a}{b}$ and $\frac{c}{d}$ contained in $\mathbb{Q}^+$ satisfy $|ad - bc| = 1$, then the rational tangles $\frac{a}{b}$, $\frac{c}{d}$, and $\frac{a+c}{b+d}$ form an unoriented skein triangle such that $\frac{a}{b}$ and $\frac{c}{d}$ are the resolutions of $\frac{a+c}{b+d}$.
\end{lemma}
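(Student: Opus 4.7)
My plan is to proceed by induction on the length of an $SL_2(\mathbb{Z})$-decomposition of the matrix $M = \begin{pmatrix} a & c \\ b & d \end{pmatrix}$, which has $\det M = \pm 1$ by hypothesis (this also forces $\gcd(a,b) = \gcd(c,d) = 1$, so the fractions are in reduced form). In the base case $M = \pm I$, the Farey pair is, up to swap or sign, $(\frac{1}{0}, \frac{0}{1})$, whose mediant is $\frac{1}{1} = [1]$, a single half-twist; by construction the two unoriented smoothings of this crossing are $[0] = \frac{0}{1}$ and $\frac{1}{[0]} = \frac{1}{0}$, so the required skein triangle is immediate.

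The inductive step rests on a geometric invariance principle: if $(T, T_0, T_1)$ is an unoriented skein triangle arising from a distinguished crossing inside $T$, then for every $k \in \mathbb{Z}$ the triples $(T + [k], T_0 + [k], T_1 + [k])$ and $(T * \frac{1}{[k]}, T_0 * \frac{1}{[k]}, T_1 * \frac{1}{[k]})$ are again skein triangles, because the boundary-adjoined elementary twist is disjoint from the distinguished interior crossing and leaves its local smoothings unchanged. On fractions, the operations $+[k]$ and $*\frac{1}{[k]}$ act on the column vector $\binom{p}{q}$ by left multiplication by $\begin{pmatrix} 1 & k \\ 0 & 1 \end{pmatrix}$ and $\begin{pmatrix} 1 & 0 \\ k & 1 \end{pmatrix}$ respectively, and together with their inverses these matrices generate all of $SL_2(\mathbb{Z})$. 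Given a Farey pair, I decompose $M = A_1 \cdots A_n$ into such elementary factors and strip off the outermost $A_n$ by applying the corresponding inverse boundary twist simultaneously to all three tangles $\frac{a+c}{b+d}$, $\frac{a}{b}$, $\frac{c}{d}$. This yields a shorter matrix decomposition and a strictly simpler Farey pair whose mediant is still the new column-sum; by the induction hypothesis the reduced triple is a skein triangle, and pulling back through the twist operation gives the desired triple for the original pair.

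The main obstacle lies in making the geometric invariance principle rigorous: when $\frac{a+c}{b+d}$ is realized as $T' + [k]$ (or $T' * \frac{1}{[k]}$) with $T'$ inheriting the inductive skein triangle structure, one must check that the distinguished crossing of $T'$ persists as a crossing in the enlarged tangle diagram and that its two unoriented smoothings really do produce $T_0' + [k]$ and $T_1' + [k]$ (and analogously for $*$). This follows from the boundary nature of the tangle operations and the local form of unoriented smoothings, but requires fixing a canonical diagram convention so that the inductive decomposition is unambiguous. A secondary subtlety is the case $\det M = -1$, lying in $GL_2(\mathbb{Z}) \setminus SL_2(\mathbb{Z})$; this is handled by observing that swapping $\frac{a}{b}$ and $\frac{c}{d}$ leaves the unordered skein triangle unchanged, so the coordinate swap can be freely adjoined to the generating set.
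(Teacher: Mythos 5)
Your argument is correct in outline, and it reaches the conclusion by a genuinely different route from the paper. The paper works directly with continued fractions: by an explicit Euclidean-algorithm computation (with a case split on whether $\lfloor \frac{a}{b}\rfloor=\lfloor \frac{c}{d}\rfloor$, and a separate reciprocal-and-rotation argument when the resulting expansion has even length) it shows that Farey neighbors $\frac{a}{b}$ and $\frac{c}{d}$ admit continued-fraction expansions agreeing in every entry except the innermost one, where one reads $[k]$ and the other $\frac{1}{[0]}$; the mediant then carries $[k+1]$ in that position, and resolving one crossing of $[k+1]$ yields exactly $[k]$ and $\frac{1}{[0]}$. Your proof replaces that explicit construction by induction on the length of a word expressing $\left(\begin{smallmatrix} a & c\\ b & d\end{smallmatrix}\right)$ in the elementary matrices, using (i) the fact that these matrices generate $SL_2(\mathbb{Z})$ and act on fraction vectors exactly as the boundary operations $+[k]$ and $*\frac{1}{[k]}$ act on tangles (the paper records this action in the proof of Lemma~\ref{lem: endpoint connection}), (ii) the proposition that a rational tangle is determined up to isotopy by its fraction, to identify the tangle built by the word with the tangle of the resulting fraction, and (iii) the locality of the skein move: adjoining an elementary tangle along the boundary, away from the distinguished crossing, carries skein triangles to skein triangles. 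Point (iii) is the one ingredient the paper does not already supply, and, as you note, it is genuinely unproblematic. What your approach buys is uniformity: no case analysis on integer parts, no separate handling of zero entries or of even-length expansions, and the $\det=-1$ case is absorbed by a column swap, which fixes the column sum $\binom{a+c}{b+d}$. The only bookkeeping you should make explicit is that a fraction determines the vector $\binom{p}{q}$ only up to global sign; this is harmless because the elementary action commutes with negation and the two signs of a single crossing in the base case have the same pair of resolutions. At bottom both proofs localize the skein move at the same innermost crossing; yours organizes the reduction to that crossing more structurally, at the cost of invoking the generation of $SL_2(\mathbb{Z})$ by elementary matrices.
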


\begin{proof}
Now suppose $a, b, c, $ and $d$ are all nonzero. Note $\frac{a}{b}$ and $\frac{c}{d}$ must have the same sign, otherwise $|ad - bc| = |ad| + |bc| \geq 2$. Because the rational tangle $-r$ is simply the mirror configuration of $r$, we may assume without loss of generality that $\frac{a}{b}$ and $\frac{c}{d}$ are positive. The central claim is that $\frac{a}{b}$ and $\frac{c}{d}$ can be represented as continued fractions $(s_1, s_2, ..., s_n)$ and $(t_1, t_2, ..., t_n)$ respectively such that $s_i = t_i$ for all $2 \leq i \leq n$ and $\left\{s_1, t_1\right\} = \left\{\frac{1}{0}, k\right\}$, where $k \in \mathbb{Z}^+$ and $n$ is not necessarily odd. 

If $\lfloor \frac{a}{b} \rfloor \neq \lfloor \frac{c}{d} \rfloor$, let $u \leq \frac{a}{b} < u + 1$ and $v \leq \frac{c}{d} < v + 1.$ Then $a = bu + i$ and $c = dv + j$ for $i \in [0, b-1]$ and $j \in [0, d-1]$. Without loss of generality suppose $u > v$, and observe that 
\[ad - bc = (bu + i)d - b(dv + j) = bd(u - v) + di - bj.\]
Since $0 \leq j < d$, 
\[bd(u - v) + di - bj > bd(u - v - 1) + di \geq 0.\]
If $bd(u - v - 1) + di \geq 1$, $ad - bc > 1$, which is a contradiction. Hence $bd(u - v - 1) + di = 0$, and because $b$ and $d$ are positive integers, we must have $u = v + 1$ and $i = 0$. Then $(a, b) = (v+1, 1)$, and plugging these values into our expression for $ad - bc$ yields
\[ad - bc = d(v+1) - (dv + j) = d - j = 1.\]
This forces $(c, d)$ to equal $(dv + d-1, d)$, from which  $\frac{a}{b}$ and $\frac{c}{d}$ must be of the form
\[\frac{a}{b} = \frac{v+1}{1}, \hspace{0.8cm} \frac{c}{d} = \frac{vd + d - 1}{d}\]
for some $v \geq 0$ and $d \geq 1$.

Thus, we can express $\frac{a}{b}$ and $\frac{c}{d}$ in the desired continued fractional form as
\[\frac{a}{b} = v + \frac{1}{1 + \frac{1}{\frac{1}{0}}}, \hspace{0.8cm} \frac{c}{d} = v + \frac{1}{1 + \frac{1}{d-1}}\]
if $d \geq 1$ and
\[\frac{a}{b} = v + \frac{1}{1}, \hspace{0.8cm} \frac{c}{d} = v + \frac{1}{\frac{1}{0}}\]
if $d = 0$.

Otherwise, $\lfloor \frac{a}{b} \rfloor = \lfloor \frac{c}{d} \rfloor = m$. Beginning the continued fractional representation of $\frac{a}{b}$ and $\frac{c}{d}$, we have $\frac{a}{b} = m + \frac{1}{x_1}$ and $\frac{c}{d} = m + \frac{1}{y_1}$, defining $x_1$ and $y_1$ to be $\frac{1}{\lbrace{\frac{a}{b}}\rbrace}$ and $\frac{1}{\lbrace{\frac{c}{d}}\rbrace}$ respectively (here $\lbrace{x\rbrace} = x - \lfloor x \rfloor$). We can continue these representations by iteratively writing (for $k \geq 1$) $x_k = \lfloor x_k \rfloor + \frac{1}{x_{k+1}}$ and $y_k = \lfloor y_k \rfloor + \frac{1}{y_{k+1}}$, where $x_{k+1} = \frac{1}{\lbrace{x_k\rbrace}}$ and $y_{k+1} = \frac{1}{\lbrace{y_k\rbrace}}$. Since $\frac{a}{b} \neq \frac{c}{d}$, there exists $N$ for which $\lfloor x_N \rfloor \neq \lfloor y_N \rfloor$ but $\lfloor x_t \rfloor = \lfloor y_t \rfloor$ for all $t < N$. The property $|ad - bc| = 1$ is preserved under subtracting an integer constant $M$ from both $\frac{a}{b}$ and $\frac{c}{d}$, as $\frac{a - bM}{b}$ and $\frac{c - dM}{d}$ are in lowest terms if $\frac{a}{b}$ and $\frac{c}{d}$ are and $|(a-bM)d - b(c - dM)| = |ad - bc| = 1$. In addition, this property is preserved under taking reciprocals, so if rationals $\lfloor x_N \rfloor$ and $\lfloor y_N \rfloor$ can respectively be written as $\frac{p}{q}$ and $\frac{r}{s}$ in lowest terms, $|ps - qr| = 1$. This allows us to use the representation described above for $\lfloor x_N \rfloor$ and $\lfloor y_N \rfloor$, yielding the desired continued fractional representation for $\frac{a}{b}$ and $\frac{c}{d}$. Thus, any positive rationals $\frac{a}{b}$ and $\frac{c}{d}$ satisfying $|ad - bc| = 1$ can be represented as continued fractions $(s_1, s_2, ..., s_n)$ and $(t_1, t_2, ..., t_n)$ respectively such that $s_i = t_i$ for all $2 \leq i \leq n$ and $\left\{s_1, t_1\right\} = \left\{\frac{1}{0}, k\right\}$, where $k \in \mathbb{Z}^+$.

From here, it is straightforward to observe that $\frac{a+c}{b+d}$ can be represented by the continued fraction $(r_1, r_2, ..., r_n)$, where $r_i = s_i = t_i$ for all $2 \leq i \leq n$ but $r_1 = k+1$. If $n$ is odd, these representations are consistent with the construction for rational tangles described in Section \ref{subsec: rational tangles}. Rational tangles $\frac{a}{b}, \frac{c}{d}$, and $\frac{a+c}{b+d}$ differ only at the first elementary rational tangles $[s_1], [t_1]$, and $[r_1]$. Resolving any crossing in $[r_1] = [k+1]$ results in the tangles $[k]$ and $\frac{1}{[0]}$, so $\frac{a}{b}$ and $\frac{c}{d}$ are the resolutions of $\frac{a+c}{b+d}$. 

If $n$ is even, we instead consider $\frac{b}{a}, \frac{d}{c}$, and $\frac{b+d}{a+c}$, which have continued fractions $(s_1, s_2, ..., s_n, 0)$, $(t_1, t_2, ..., t_n, 0)$, and $(r_1, r_2, ..., r_n, 0)$. However, from the odd $n$ case, $\frac{b}{a}$ and $\frac{d}{c}$ are the resolutions of $\frac{b+d}{a+c}$. The rational tangle $\frac{1}{r}$ can be obtained by rotating the rational tangle $r$ $90^{\circ}$ clockwise, reflecting across the vertical axis, and subsequently taking the mirror image. These transformations preserve an unoriented skein triangle, implying that $\frac{a}{b}$ and $\frac{c}{d}$ are the resolutions of $\frac{a+c}{b+d}$. This concludes the proof of the lemma.
\end{proof}

\begin{flushleft}
Since both a crossing and its crossing change result in the same resolutions, we can use Lemma \ref{lem: unoriented skein} to find an algebraic relation for an oriented skein triangle:
\end{flushleft}

\begin{lemma}\label{lem: oriented skein}
Suppose rational numbers $\frac{a}{b}$ and $\frac{c}{d}$ satisfy $|ad-bc|=1$. Then the unoriented tangles $\frac{a+c}{b+d}$ and $\frac{a-c}{b-d}$ differ by a crossing change. Furthermore, suppose they are given orientations compatible under this crossing change. Then exactly one of the oriented tangles $\frac{a}{b}$ and $\frac{c}{d}$ is the result of the oriented resolution of this crossing.
\end{lemma}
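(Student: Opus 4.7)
The plan is to reduce the unoriented claim to two applications of Lemma \ref{lem: unoriented skein} via the symmetry $(c,d)\leftrightarrow(-c,-d)$, and then upgrade by analyzing the canonical oriented resolution. Since $|ad-bc|=|a(-d)-b(-c)|$, both pairs $(\tfrac{a}{b},\tfrac{c}{d})$ and $(\tfrac{a}{b},\tfrac{-c}{-d})$ satisfy the hypothesis of Lemma \ref{lem: unoriented skein}. Applying that lemma twice gives that $\tfrac{a+c}{b+d}$ has resolutions $\tfrac{a}{b}$ and $\tfrac{c}{d}$, while $\tfrac{a+(-c)}{b+(-d)}=\tfrac{a-c}{b-d}$ has resolutions $\tfrac{a}{b}$ and $\tfrac{-c}{-d}$. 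Since $\tfrac{-c}{-d}=\tfrac{c}{d}$ in $\mathbb{Q}^+$, and rational tangles are determined by their fractions, both central tangles share the same unordered pair of unoriented resolutions.

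Next, I would argue that the two central tangles must differ by a crossing change. A skein resolution is a local operation: an unoriented skein triangle $(L,L_0,L_1)$ comes from fixing a single crossing in $L$ and replacing it by its two smoothings. Conversely, given the pair $L_0,L_1$ with compatible boundary data, the only ways to reinsert a crossing are the two over/under choices, producing exactly two central tangles that differ precisely by a crossing change at that location. Hence $\tfrac{a+c}{b+d}$ and $\tfrac{a-c}{b-d}$ must be these two reinsertions, and therefore differ by a crossing change---provided they are genuinely distinct, which holds because $(a+c)(b-d)-(a-c)(b+d)=2(bc-ad)\neq 0$ makes the two fractions unequal.

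For the oriented upgrade, compatible orientations on the two central tangles make the distinguished crossing positive in one diagram and negative in the other, with matching orientations on the four boundary endpoints. The canonical oriented resolution (the $L_o$ of the oriented skein triangle, as in Figure 2b) is defined to be the unique smoothing of the crossing consistent with the orientations of the two incoming strands; this singles out exactly one of the two unoriented resolutions, with a canonical induced orientation. Hence the oriented resolution produces exactly one of the oriented tangles $\tfrac{a}{b}$ or $\tfrac{c}{d}$; the other becomes the non-canonical resolution.

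The main obstacle is the middle step: justifying that shared unoriented resolutions force a crossing change rather than some more global difference between the two central tangles. I would address this by appealing to the local nature of resolutions together with the continued-fraction picture from the proof of Lemma \ref{lem: unoriented skein}, which localizes the resolved crossing to the innermost elementary tangle of a shared continued-fraction presentation. Handling the edge cases where denominators vanish in $\mathbb{Q}^+$ (so that $\tfrac{a-c}{b-d}$ reduces to $\tfrac{1}{0}$ or a trivial tangle) requires brief additional attention but follows the same pattern.
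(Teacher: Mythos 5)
Your proposal is correct and follows essentially the same route as the paper: both apply Lemma \ref{lem: unoriented skein} to the pairs $(\tfrac{a}{b},\tfrac{c}{d},\tfrac{a+c}{b+d})$ and $(\tfrac{a}{b},\tfrac{c}{d},\tfrac{a-c}{b-d})$, observe that two tangles sharing the same pair of resolutions must differ by a crossing change (there being exactly two ways to reinsert a crossing), and then note that the canonical oriented resolution selects exactly one of $\tfrac{a}{b}$, $\tfrac{c}{d}$. Your explicit distinctness check $(a+c)(b-d)-(a-c)(b+d)=2(bc-ad)\neq 0$ is a small addition the paper leaves implicit.
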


\begin{proof}
In an unoriented skein triangle $(T, T_0, T_1)$, if two of the members are fixed, there are exactly two possibilities for the third, which differ by a crossing change. Since $(\frac{a}{b}, \frac{c}{d}, \frac{a+c}{b+d})$ and $(\frac{a}{b}, \frac{c}{d}, \frac{a-c}{b-d})$ are unoriented skein triangles, it follows that $\frac{a+c}{b+d}$ and $\frac{a-c}{b-d}$ are related by a crossing change. After assigning these two tangles orientations compatible with this crossing change, the canonical resolution passes this orientation to exactly one of $\frac{a}{b}$ and $\frac{c}{d}$, as desired.
\end{proof}

\section{Unoriented Generating Set}
\subsection{Recursive Application of Unoriented Skein Triangles}
Recall that the set $\mathcal{S}$ of unoriented links is closed under the unoriented skein triangle if the following condition holds:
\begin{itemize}
    \item Suppose three links each with nonzero determinant constitute an unoriented skein triangle. Then if any two of them belong to the $\mathcal{S}$, so does the third. 
\end{itemize}
Our goal is to find a minimal generating set so that $\mathcal{S}$ is the set of all nonzero determinant unoriented links. Since all links are composed of a nexus of rational tangles (distinguished by the disks around them), we can focus on a particular disk. With a few initial generators, we can use Lemma \ref{lem: unoriented skein} recursively to replace the interior of the disk with other rational tangles, adding a variety of new links to $\mathcal{S}$.

\begin{lemma}\label{lem: tangle span}
Consider a set $\mathcal{T}$ containing rational tangles $\frac{0}{1}$ and $\frac{1}{0}$ subject to the condition: if any two members of an unoriented skein triangle of rational tangles belongs to $\mathcal{T}$, so does the third. Then $\mathcal{T}$ contains all unoriented rational tangles.
\end{lemma}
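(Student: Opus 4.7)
The plan is to induct on the complexity $|p|+|q|$ of a reduced representative $\frac{p}{q} \in \mathbb{Q}^+$ of the rational tangle, with Lemma \ref{lem: unoriented skein} as the engine. That lemma says $\frac{a}{b}, \frac{c}{d}, \frac{a+c}{b+d}$ form an unoriented skein triangle precisely when $|ad-bc|=1$. The base case $|p|+|q|=1$ is exactly the pair of generators $\frac{0}{1}$ and $\frac{1}{0}$ already in $\mathcal{T}$ by hypothesis.

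For the inductive step, given a reduced $\frac{p}{q}$ with $|p|+|q|\geq 2$, I would produce integers $a,b,c,d$ satisfying $a+c=p$, $b+d=q$, $|ad-bc|=1$, and both $|a|+|b|<|p|+|q|$ and $|c|+|d|<|p|+|q|$. The inductive hypothesis then places $\frac{a}{b}, \frac{c}{d} \in \mathcal{T}$, Lemma \ref{lem: unoriented skein} produces the unoriented skein triangle $(\frac{a}{b},\frac{c}{d},\frac{p}{q})$, and the closure condition on $\mathcal{T}$ deposits $\frac{p}{q}$ in $\mathcal{T}$.

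The existence of such a decomposition is the Stern--Brocot / Farey-mediant picture. Because $\gcd(p,q)=1$, Bezout supplies integers $a,b$ with $aq-bp=\pm 1$; setting $c=p-a$ and $d=q-b$ then gives $ad-bc=aq-bp=\pm 1$ for free. The family of Bezout solutions is a single coset under the shift $(a,b)\mapsto(a+p,b+q)$, and I would use this freedom to normalize $(a,b)$ so that $a$ lies between $0$ and $p$ and $b$ lies between $0$ and $q$ (in the appropriate sign-sensitive sense). The relation $aq-bp=\pm 1$ rules out $(a,b)=(0,0)$, which upgrades both complexity inequalities to strict ones. Boundary cases where $p$ or $q$ vanishes are handled directly: for instance $\frac{k+1}{1}$ comes from the triangle $(\frac{k}{1}, \frac{1}{0}, \frac{k+1}{1})$, and negative integers are reached by representing the generator $\frac{1}{0}$ instead as $\frac{-1}{0}$ (the same tangle), giving e.g.\ the triangle $(\frac{-1}{0}, \frac{0}{1}, \frac{-1}{1})$ and inductively $[{-k-1}]$ from $([{-k}], \frac{-1}{0}, [{-k-1}])$.

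The main obstacle I anticipate is precisely this sign and range bookkeeping, namely verifying uniformly across all sign patterns of $p,q$ that the normalized Bezout pair yields two legal fractions in $\mathbb{Q}^+$ with strictly smaller complexity. The dual role of $\frac{1}{0} = \frac{-1}{0}$ as a two-sided ``infinity'' for the mediant operation is the subtle point that lets the two stated generators reach the negative half of the rationals; once this is pinned down, the induction closes without further work.
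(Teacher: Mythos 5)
Your proof is correct and follows essentially the same route as the paper's: both decompose a reduced fraction $\frac{p}{q}$ as the Farey mediant of two determinant-one ``parents,'' invoke Lemma \ref{lem: unoriented skein} together with the closure condition, and recurse. The only cosmetic difference is the induction measure ($|p|+|q|$ versus the paper's strong induction on the denominator, with the parents produced by an explicit modular inverse rather than a normalized Bezout pair), and your handling of negative tangles via the representative $\frac{-1}{0}$ matches the paper's use of the anti-mediant $\frac{a-c}{b-d}$.
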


\begin{proof}
To show $\frac{p}{q} \in \mathcal{T}$ for all reduced fractions $\frac{p}{q} \in \mathbb{Q}^+$, we shall strong induct on $q$. The case $q = 0$ is already given, so we prove the hypothesis for $q = 1$. By Lemma \ref{lem: unoriented skein}, 
\[|ad - bc| = 1, \hspace{0.25cm} \frac{a}{b}, \hspace{0.125cm} \frac{c}{d} \in \mathcal{T} \Longrightarrow \frac{a+c}{b+d}, \hspace{0.125cm} \frac{a-c}{b-d} \in \mathcal{T}.\]
Then $\frac{1}{1}$ and $\frac{-1}{1}$ are contained in $\mathcal{T}$, and if $\frac{k}{1} \in \mathcal{T}$ for $k \geq 1$, $\frac{k+1}{1+0} = \frac{k+1}{1} \in \mathcal{T}$. Similarly, if $\frac{-k}{1} \in \mathcal{T}$ for $k \geq 1$, $\frac{-k-1}{1-0} \in \mathcal{T}$, so our set contains all rational tangles of the form $\frac{n}{1}$, where $n \in \mathbb{Z}$.

Now consider a reduced fraction $\frac{j}{k}$ and assume all reduced fractions of denominator $k-1$ or less are contained in $\mathcal{T}$. Let $q$ be the unique residue such that $qj \equiv -1 \pmod{k}$, which exists because gcd($j, k) = 1$. Setting $p = \frac{qj+1}{k}$, note that since gcd($p, q) = 1$ and $0 < q < k$, the inductive hypothesis implies $\frac{p}{q} \in \mathcal{T}$. Furthermore, we have the relation $kp - qj = 1$. Then defining $r = \frac{j(k-q)-1}{k}$ and $s = k-q$ ensures that gcd($r, s) = 1$ and $0 < s < k$, so $\frac{r}{s}$ is also contained in $\mathcal{T}$ by the inductive hypothesis. Fractions $\frac{p}{q}$ and $\frac{r}{s}$ satisfy $|sp - qr| = 1$ and $\frac{p+r}{q+s} = \frac{j}{k}$, so it follows that $\frac{j}{k} \in \mathcal{T}$. Thus, all reduced fractions with denominator $k$ are contained in $\mathcal{T}$, completing our induction. We conclude that $\mathcal{T}$ contains all unoriented rational tangles, as desired. 
\end{proof}

\begin{flushleft}
Thus we can generate all possible rational tangle replacements of a disk through applying a series of skein triangle conditions starting from just the trivial tangles. However, the challenge remains that some of these links may have zero determinant, which invalidates some of the skein triangles. The following lemmas are crucial in circumventing this issue.  
\end{flushleft}

\begin{lemma}\label{lem: determinant tangle lemma}
Given any link $L$, choose a disk containing a rational tangle and delete its contents, and let $L(\frac{p}{q})$ denote the link resulting from inserting the tangle $\frac{p}{q}$ into the disk. Then there exist integers $a$ and $b$ such that $\det(L(\frac{p}{q})) = |bp - aq|$ for all $\frac{p}{q} \in \mathbb{Q}^+$. 
\end{lemma}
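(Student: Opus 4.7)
I would mirror the Farey/Stern--Brocot strong induction used in Lemma \ref{lem: tangle span}, building every rational tangle $p/q$ from the two trivial tangles $\tfrac{0}{1}$ and $\tfrac{1}{0}$ by successive Farey-mediant skein triangles (Lemma \ref{lem: unoriented skein}), and pushing the linear formula $\det(L(p/q)) = |bp - aq|$ along this tree. The constants are forced at the base: setting $a := \det(L(0/1))$ and $b := \det(L(1/0))$ gives $|b\cdot 0 - a\cdot 1| = a$ and $|b\cdot 1 - a\cdot 0| = b$, so the formula already holds at the trivial tangles.

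The key auxiliary input is the standard unoriented skein identity for the determinant: for any unoriented skein triangle $(M, M_0, M_1)$ there exist signs $\varepsilon_0, \varepsilon_1 \in \{\pm 1\}$ with $\det(M) = \varepsilon_0 \det(M_0) + \varepsilon_1 \det(M_1)$ --- equivalently, one of the three determinants equals the sum of the other two. This follows from the Kauffman bracket specialized at $A = e^{i\pi/4}$, whose skein rule $\langle M \rangle = A \langle M_0 \rangle + A^{-1} \langle M_1 \rangle$ is linear and whose modulus recovers $\det$ up to a fixed normalization. With this in hand, the inductive step runs as follows: for a reduced $p/q$ with $q \geq 2$, choose Farey neighbors $p_1/q_1, p_2/q_2$ with $q_1, q_2 < q$, $|p_1 q_2 - p_2 q_1| = 1$, and $(p, q) = (p_1 + p_2, q_1 + q_2)$; Lemma \ref{lem: unoriented skein} promotes this to an unoriented skein triangle $(L(p/q), L(p_1/q_1), L(p_2/q_2))$; the inductive hypothesis gives $\det(L(p_i/q_i)) = |bp_i - aq_i|$; and combining the skein identity with the algebraic identity $(bp_1 - aq_1) + (bp_2 - aq_2) = bp - aq$ yields $\det(L(p/q)) = |bp - aq|$.

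The main obstacle is sign management. The skein identity allows three sign patterns, whereas the absolute-value identity $|x| + |y| = |x + y|$ requires $x$ and $y$ to share a sign; ensuring the correct pattern is selected is the technical heart of the argument. I would resolve this by upgrading the induction to a \emph{signed} invariant --- for instance, $\tilde D(L) := \det$ of a Goeritz matrix with a fixed checkerboard convention, which satisfies an unambiguous linear skein equality $\tilde D(M) = \tilde D(M_0) + \tilde D(M_1)$ --- and proving $\tilde D(L(p/q)) = bp - aq$ (without absolute value) throughout the Farey tree before taking $|\cdot|$ at the end. An attractive alternative that bypasses signs entirely is a direct block decomposition of the coloring matrix: the internal block for the rational tangle $p/q$ reduces via the continued-fraction/$SL_2(\mathbb Z)$ description from Section \ref{subsec: rational tangles} to a single linear relation with coefficients $p, q$ on the four endpoint colors, after which the reduced determinant of the outer block factors as a fixed integer constant times $bp - aq$, giving the formula directly.
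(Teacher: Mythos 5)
Your route is genuinely different from the paper's. The paper disposes of this lemma in three lines by passing to topology: by Corollary $9.2$ of Lickorish, $\det(L)$ is the order of the first homology of the double branched cover, inserting a $\frac{p}{q}$ tangle corresponds to $\frac{p}{q}$-surgery on a knot in that cover, and the order of $H_1$ of such a surgery is $|bp-aq|$ by a standard homological computation. Your proposal stays entirely diagrammatic, which keeps the argument in the elementary spirit of Section \ref{sec:preliminaries} and avoids invoking branched covers at all; what the paper's argument buys is brevity and the fact that linearity in $(p,q)$ is structurally forced rather than recovered by sign bookkeeping. Two remarks on your sketch. First, your auxiliary identity --- that in any unoriented skein triangle one determinant is the sum of the other two, i.e.\ $\det(M)=|\det(M_0)\pm\det(M_1)|$ --- is correct (it is the standard identity underlying quasi-alternating links), but as you yourself note the Farey induction cannot be run on absolute values, and your proposed repair via the signed Goeritz determinant and its deletion--contraction relation is the right one; it still leaves the nontrivial verification that the crossing resolved in the mediant triangle of Lemma \ref{lem: unoriented skein} always carries the sign making $(bp_1-aq_1)+(bp_2-aq_2)=bp-aq$ propagate with plus signs, which your sketch defers rather than settles. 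Second, your ``attractive alternative'' is arguably the cleanest elementary completion and is secretly the paper's proof in disguise: the Goeritz (equivalently, coloring) matrix presents $H_1$ of the double cover, and the Schur complement of the tangle's internal block is Conway's effective conductance $\frac{p}{q}$, so $\det(L(\frac{p}{q}))=|\alpha p+\beta q|$ falls out directly. Either completion is sound; neither is the argument the paper actually gives.
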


\begin{proof} This can be deduced from Corollary $9.2$ of \cite{Lickorish}. For completeness, we provide a sketch of the argument. By Corollary $9.2$, the link determinant equals the order of the first homology of the branched double cover of a link $L$. Note that when passing to the branched double cover, putting a $\frac{p}{q}$ tangle corresponds to doing a surgery with coefficient $\frac{p}{q}$. Then a straightforward calculation verifies this result. 
\end{proof}

\begin{corollary}\label{lem: determinant of complexity one}
Without loss of generality, assume the orientation of the complexity $1$ link in figure 8. Then $\det(L(\frac{p}{q})) = |q|$. 
\end{corollary}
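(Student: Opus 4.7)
The plan is to apply Lemma \ref{lem: determinant tangle lemma} and then pin down the two integers $a,b$ by evaluating $L(\frac{p}{q})$ at two convenient tangles. By Lemma \ref{lem: determinant tangle lemma} there exist integers $a,b$ (depending only on the ambient complexity $1$ diagram, i.e.\ on how the four endpoints of the single disk are joined up outside) such that
\[\det\!\left(L(\tfrac{p}{q})\right) = |bp - aq| \quad \text{for all } \tfrac{p}{q} \in \mathbb{Q}^+.\]
So the entire proof reduces to computing $\det(L(\frac{0}{1}))$ and $\det(L(\frac{1}{0}))$ from the specific diagram in Figure 8.

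Next I would read off from Figure 8 exactly which trivial closure is obtained from each of the two trivial tangles. Under the fixed orientation of the complexity $1$ link shown there, inserting $\frac{0}{1}$ (two parallel horizontal strands) produces the unknot, so by convention $\det(L(\frac{0}{1})) = 1$. Plugging $\frac{p}{q} = \frac{0}{1}$ into the formula gives $|a| = 1$. Inserting $\frac{1}{0}$ (two parallel vertical strands) instead yields a split configuration of two unknotted components, because the vertical strands complete a disjoint pair of circles with the exterior arcs. By Lemma \ref{lem: split config}, this split configuration has determinant $0$, so plugging $\frac{p}{q} = \frac{1}{0}$ into the formula gives $|b| = 0$, hence $b = 0$.

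Combining, $\det(L(\frac{p}{q})) = |0 \cdot p - (\pm 1) \cdot q| = |q|$, which is exactly the claim.

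The only real subtlety here is the identification in the previous paragraph: one must check that the specific closure fixed in Figure 8 really sends $\frac{0}{1}$ to the unknot and $\frac{1}{0}$ to the two-component unlink (as opposed to the other way around, which would swap the roles of $a$ and $b$ and give $\det = |p|$ rather than $|q|$). This is a direct reading of the figure and not a substantive obstacle; once the convention is fixed, the computation above finishes the proof.
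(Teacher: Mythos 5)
Your proposal is correct and follows exactly the paper's argument: apply Lemma \ref{lem: determinant tangle lemma}, then pin down $a$ and $b$ by observing that inserting $\frac{1}{0}$ yields a split configuration (determinant $0$, so $b=0$ via Lemma \ref{lem: split config}) and inserting $\frac{0}{1}$ yields the unknot (determinant $1$, so $|a|=1$). No substantive differences from the paper's proof.
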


\begin{center}
\includegraphics[width = 5cm]{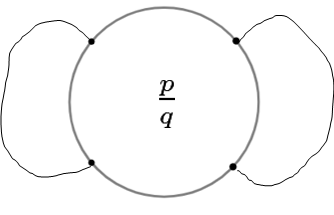}

Figure 8: Complexity $1$ link containing rational tangle $\frac{p}{q}$.
\end{center}

\begin{proof}
Inserting the rational tangle $\frac{1}{0}$ into the disk gives a split configuration with zero determinant, so by Lemma \ref{lem: determinant tangle lemma}, $|b\cdot 1 - a\cdot 0| = |b| = 0$; i.e. $b = 0$. Similarly, inserting $\frac{0}{1}$ gives the unknot, which has determinant $1$, so $|-a\cdot1| = |a| = 1$. It follows that for any rational tangle $\frac{p}{q}$, the determinant is $|bp-aq| = |q|$, as desired.
\end{proof}

\begin{lemma}\label{lem: zero determinant condition}
If there exists a rational tangle $\frac{r}{s}$ for which $L(\frac{r}{s})$ has nonzero determinant, then there is at most one $x \in \mathbb{Q}^+$ for which $L(x)$ has zero determinant.
\end{lemma}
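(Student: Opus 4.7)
The plan is to apply Lemma \ref{lem: determinant tangle lemma} directly and then analyze when the resulting linear expression vanishes. By that lemma, there exist integers $a, b$ (depending only on $L$ and the chosen disk) such that
\[\det\!\left(L\!\left(\tfrac{p}{q}\right)\right) = |bp - aq|\]
for every reduced fraction $\frac{p}{q} \in \mathbb{Q}^+$. So the set of zero-determinant insertions corresponds exactly to the set of reduced $(p,q)$ with $bp = aq$.

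Next I would use the hypothesis that some $\frac{r}{s}$ gives $L(\frac{r}{s})$ nonzero determinant to rule out $(a,b) = (0,0)$: otherwise $\det(L(x)) = 0$ identically, contradicting the hypothesis. So at least one of $a, b$ is nonzero.

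The main step is then a short case analysis on $(a, b)$. If $b = 0$ and $a \neq 0$, then $|bp - aq| = |aq| = 0$ forces $q = 0$, so the only offending fraction is $\frac{1}{0}$. If $a = 0$ and $b \neq 0$, then $bp = 0$ forces $p = 0$, yielding the unique fraction $\frac{0}{1}$. If both $a, b$ are nonzero, then $bp = aq$ combined with $\gcd(p,q) = 1$ forces $\frac{p}{q}$ to be the unique reduced representative of $\frac{a}{b}$. In every case there is at most one $x \in \mathbb{Q}^+$ with $\det(L(x)) = 0$, which is exactly the conclusion.

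I do not expect any real obstacle here: the entire content is bundled into Lemma \ref{lem: determinant tangle lemma}, after which the argument is a one-variable linear Diophantine observation. The only mild subtlety is remembering to treat the point at infinity $\frac{1}{0}$ as a bona fide element of $\mathbb{Q}^+$ and checking the three degenerate cases above so as not to miss it.
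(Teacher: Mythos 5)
Your proposal is correct and follows essentially the same route as the paper: both reduce the claim to Lemma \ref{lem: determinant tangle lemma} and observe that the linear form $|bp-aq|$ can vanish at no more than one point of $\mathbb{Q}^+$ unless $(a,b)=(0,0)$, which the hypothesis on $\frac{r}{s}$ excludes. The paper phrases this as a contradiction from two distinct zeros while you do a direct case split on $(a,b)$, but the substance is identical.
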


\begin{proof}
Suppose on the contrary that $L(\frac{p}{q})$ and $L(\frac{p'}{q'})$ have zero determinant for distinct fractions $\frac{p}{q}$ and $\frac{p'}{q'}$. By Lemma \ref{lem: determinant tangle lemma}, this means that $bp = aq$ and $bp' = aq'$, where $a$ and $b$ are fixed. If $b$ is nonzero, $p = \frac{a}{b} q$ and $p' = \frac{a}{b} q'$, whence $pq' = p'q = \frac{a}{b}qq'$, a contradiction since $\frac{p}{q}$ and $\frac{p'}{q'}$ are distinct. Thus $b = 0$, but both $q$ and $q'$ cannot be $0$ otherwise both fractions are equal to $\frac{1}{0}$, so $a = 0$ as well. Thus, $L(x)$ has zero determinant for all $x \in \mathbb{Q}^+$, which is another contradiction if we set $x = \frac{r}{s}$. Then there is at most one $x$ for which $L(x)$ has zero determinant, as desired.
\end{proof}

\begin{flushleft}
The power of Lemma \ref{lem: zero determinant condition} is that it limits the number of possible invalid skein triangles. Since any given rational tangle belongs to infinitely many skein triangles, we can simply find other pathways to admit a certain link into $\mathcal{S}$.
\end{flushleft}

\subsection{Proof of the Unoriented Generating Set Theorem}
Our previous lemmas suggest that closure under the unoriented skein triangle can be used recursively to admit a large number of links into $\mathcal{S}$ from a limited number of generators. We now prove our first theorem:

\begin{theorem}
Given a set S of unoriented nonzero determinant links, suppose that it is closed under the
unoriented skein triangle and contains the unknot. Then it contains all nonzero determinant links.
\end{theorem}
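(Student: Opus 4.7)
The plan is strong induction on the complexity $n$ of $L$. The case $n = 0$ is trivial since the only crossingless link with nonzero determinant is the unknot, which belongs to $\mathcal{S}$ by hypothesis.

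For $n = 1$, I would run a secondary induction on the denominator $q$ of the rational tangle $\frac{p}{q}$ in the single disk. By Corollary \ref{lem: determinant of complexity one}, $\det(L(\frac{p}{q})) = |q|$, so we need only consider $q \geq 1$. When $q = 1$ the link is a $2$-bridge link of integer slope, hence the unknot, so it lies in $\mathcal{S}$. For $q \geq 2$ I would write $\frac{p}{q}$ as a Farey mediant $\frac{a+c}{b+d}$ of two reduced fractions with $b + d = q$, $b, d < q$, and $|ad - bc| = 1$; Lemma \ref{lem: unoriented skein} then exhibits a skein triangle on $L(\frac{a}{b}), L(\frac{c}{d}), L(\frac{p}{q})$ whose three determinants $|b|, |d|, |q|$ are all nonzero, so the inductive hypothesis combined with closure forces $L(\frac{p}{q}) \in \mathcal{S}$.

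For the inductive step $n \geq 2$, I would fix a minimal decomposition of $L$, pick a disk $D$ with tangle $\frac{p_0}{q_0}$, and study the family $L(\frac{p}{q})$. By Lemma \ref{lem: determinant tangle lemma} the determinants have the form $|bp - aq|$ for fixed integers $a, b$ not both zero, and by Lemma \ref{lem: zero determinant condition} at most one fraction yields zero determinant. Since inserting a trivial tangle collapses $D$ into two parallel strands that merge with adjacent disks, both $L(\frac{0}{1})$ and $L(\frac{1}{0})$ admit decompositions with at most $n - 1$ disks, so by the inductive hypothesis they lie in $\mathcal{S}$ whenever their determinants $|a|, |b|$ are nonzero. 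In the generic subcase with both $a, b$ nonzero, I would mimic the Farey-mediant argument of Lemma \ref{lem: tangle span} applied to the link family, rerouting around the at-most-one forbidden fraction by using an alternative mediant pair whenever the default one hits it.

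The main obstacle is the degenerate subcase where exactly one of $a, b$ vanishes, leaving only a single seed link in $\mathcal{S}$. I expect to handle this by showing that one can always find some disk in some (possibly non-minimal) decomposition of $L$ for which both trivial insertions yield nonzero determinant, for example by locally refining a degenerate disk into several elementary sub-disks or by choosing a different disk in a multi-disk decomposition. Proving that such a non-degenerate choice always exists is where the bulk of the technical work resides.
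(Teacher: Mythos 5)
Your outline diverges from the paper's proof (which runs an outer induction on the number of components and an inner induction on complexity), and the two points where it diverges are exactly where it has gaps. The ``degenerate subcase'' you defer is not a corner case but the typical one, and the escape you propose is left entirely unproven. For instance, if $L = K_1 \# K_2$ with each summand a $2$-bridge link presented with one disk, then for every disk of the natural decomposition the insertion $L(\frac{1}{0})$ is a split configuration, hence has determinant zero by Lemma \ref{lem: split config}, so every disk is degenerate and you are left with a single seed; whether a refined decomposition always contains a non-degenerate disk is essentially as hard as the theorem itself. The paper never looks for such a disk: it proves a separate connected-sum lemma (Lemma \ref{lem: connected sum lemma}), showing $K_1, K_2 \in \mathcal{S} \Rightarrow K_1 \# K_2 \in \mathcal{S}$ by carrying $\# K_2$ through the entire generating sequence of $K_1$, which supplies all the integer insertions $L(\frac{n}{1})$ as replacement seeds. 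A second problem is that your ``alternative mediant pair'' rerouting does not exist: a reduced fraction $\frac{j}{k}$ has a \emph{unique} pair of Farey parents with denominators strictly less than $k$, and the one forbidden fraction is a parent of infinitely many fractions, so infinitely many targets have no admissible mediant presentation from below. The paper dodges the bad fraction in the opposite direction, using triangles $(L(\frac{p}{q}), L(\frac{p'+mp}{q'+mq}), L(\frac{p'+(m+1)p}{q'+(m+1)q}))$ in which the target is a resolution of tangles of large denominator, after first establishing (Lemma \ref{lem: finitely many}, via a second auxiliary disk when the two external strands intersect) that all but finitely many insertions already lie in $\mathcal{S}$.

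There is also a third gap you do not flag: for links with three or more components, deleting one disk leaves two strands plus untouched closed components, and both trivial insertions can be obstructed, so the induction on complexity alone does not close. The paper handles this with the outer induction on components together with Lemma \ref{lem: endpoint connection}: the three tangles $\frac{p}{q}$, $\frac{p'+mp}{q'+mq}$, $\frac{p'+(m+1)p}{q'+(m+1)q}$ realize all three endpoint-connection parities, so two of the three insertions merge the chosen pair of components and land in the $(k-1)$-component case, where the inductive hypothesis applies. Your base cases $n = 0, 1$ are fine and match Lemma \ref{lem: complexity one in s}, but the inductive step needs all three of these devices, none of which is supplied by the proposal.
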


\begin{proof}
We induct on the number of components of a link. For the base case, we show that all links with at most $2$ components and nonzero determinant are contained in $\mathcal{S}$. To prove this, we will induct on the complexity with the following base case:

\begin{lemma}\label{lem: complexity one in s}
All links of complexity $1$ and nonzero determinant are contained in $\mathcal{S}$.
\end{lemma}

\begin{proof}
Assume the orientation of figure $8$. Then any link obtained by placing the rational tangle $\frac{n}{1}$ in the disk for $n \in \mathbb{Z}$ is contained in $\mathcal{S}$, since it is isotopic to the unknot. By Lemma \ref{lem: tangle span}, this is enough to imply $L(\frac{p}{q}) \in \mathcal{S}$ for all $\frac{p}{q} \in \mathbb{Q}^+$ except $\frac{p}{q} = \frac{1}{0}$. All such links have determinant $|q| \neq 0$ from Corollary \ref{lem: determinant of complexity one}, so all links of complexity $1$ and nonzero determinant are contained in $\mathcal{S}$, as desired.
\end{proof}

Now assume that a nonzero determinant link $L$ has complexity $n$ with at most $2$ components, and that all such links with complexity strictly less than $n$ satisfy the hypothesis. We want to show that $L$ is contained in $\mathcal{S}$. Note there exists a disk in the complexity decomposition such that deleting the rational tangle inside yields $2$ strands. Indeed, if $L$ is a knot, any such disk will do; deleting the contents gives $4$ endpoints, and since a strand has precisely $2$ endpoints, there are exactly $2$ strands, $L_1$ and $L_2$. 

However, if $L$ has $2$ components, note that they intersect at some crossing, which in turn belongs to a rational tangle in a disk $C_1$. If the components do not intersect, the resulting split configuration has determinant zero by Lemma \ref{lem: split config}. $C_1$ intersects each component twice, so we can choose a rational tangle merging the two components, forming a knot. Then we can just delete the interior of $C_1$ again, giving us our two strands $L_1$ and $L_2$. Suppose $C_1$ originally contained the rational tangle $\frac{a}{b}$. We are left with the following two cases:

\begin{flushleft}
\textit{Case 1}: $L_1$ and $L_2$ do not intersect.
\end{flushleft}

By Lemma \ref{lem: zero determinant condition}, there is at most one rational tangle for which the determinant is $0$ upon insertion into $C_1$, since by assumption $L$ has nonzero determinant. Moreover, inserting the tangle $\frac{1}{0}$ forms a split configuration with zero determinant by Lemma \ref{lem: split config}, so inserting all other rational tangles in $C_1$ must result in a nonzero determinant link.

Placing tangles of the form $\frac{n}{1}$ for all $n \in \mathbb{Z}$ into $C_1$ have the resulting configuration of the connected sum of $L_1$ and $L_2$. Then we have the following lemma:

\begin{lemma}\label{lem: connected sum lemma}
$K_1 \in \mathcal{S}$ and $K_2 \in \mathcal{S}$ implies $K_1\#K_2 \in \mathcal{S}$.
\end{lemma}

\begin{proof}
A link $L$ is contained in $\mathcal{S}$ if there exists a sequence of nonzero determinant links $(L^0, L^1, ..., L^n)$ with $L^0, L^1 \in \mathcal{S}$ and $L^n = L$ such that for any $L^m$ with $2 \leq m \leq n$, there exist $L^i$ and $L^j$ satisfying $i, j < m$ which form an unoriented skein triangle with $L^m$. By assumption there exists such sequences to obtain $K_1$ and $K_2$. In addition, we can view $K_2$ as a connected sum between the unknot and itself. If $K_1$ has sequence $(K^0, K^1, ..., K^n)$, we can isotope this unknot obtain both $K^0$ and $K^1$, which must be isotopic to the unknot. Now consider the sequence $(K^0\#K_2, K^1\#K_2, ..., K^n\#K_2)$, where the first two terms are already in $\mathcal{S}$. Passing to the sequence $(K^0, K^1, ..., K^n)$, it follows that for any $K^m\#K_2$ with $2 \leq m \leq n$, there exist $K^i\#K_2$ and $K^j\#K_2$ satisfying $i, j < m$ that constitute an unoriented skein triangle with $K^m\#K_2$. By Proposition \ref{lem: connected sum multiply}, $\det(K^m\#K_2) = \det(K^m) \cdot \det(K_2)$, which is nonzero since $K^m$ and $K_2$ have nonzero determinant. Thus $K^n\#K_2 = K_1\#K_2$ is contained in $\mathcal{S}$, as desired. 
\end{proof}

Thus, all of the links given by inserting integer rational tangles are contained in $\mathcal{S}$, so Lemma \ref{lem: tangle span} implies $L \in \mathcal{S}$, as desired.

\begin{flushleft}
\textit{Case 2}: $L_1$ and $L_2$ do intersect. 
\end{flushleft}

Let disk $C_2$ in the complexity decomposition contain the intersection of $L_1$ and $L_2$, belonging to tangle $\frac{c}{d}$. Delete the contents of $C_2$, which means there are four components $l_1, l_2, l_3,$ and $l_4$, each of which connecting $C_1$ to $C_2$. Let $L(x, y)$ denote the link obtained by inserting rational tangles $x$ and $y$ into $C_1$ and $C_2$ respectively.

\begin{lemma}\label{lem: at most one}
For any rational tangle $x$ there exists at most one rational tangle $y$ such that $\det(L(x, y)) = 0$.
\end{lemma}

\begin{proof} Suppose there are more than one such rational tangles, and from Lemma \ref{lem: zero determinant condition}, this means the determinant is zero for all $y$. However, if $x$ without loss of generality connects components $l_1$ to $l_2$ and $l_3$ to $l_4$, we can choose a suitable rational tangle $z$ that connects $l_1$ to $l_3$ and $l_2$ to $l_4$, resulting in a link with one component; i.e. a knot. However, a knot always has odd determinant by Lemma \ref{lem: knot odd determinant}, so $\det(L(x, z)) \neq 0$, contradiction. Then only one such $y$ exists, as desired. \end{proof}

Thus, there is at most one $y$ for which $\det(L(\frac{0}{1}, y)) = 0$. For all other $y$, the inductive hypothesis implies that $L(\frac{0}{1}, y)$ is contained in $\mathcal{S}$ since its complexity is less than $n$ and it has nonzero determinant. Similarly, there is at most one $y$ for which $L(\frac{0}{1}, y) \notin \mathcal{S}$. We can extend this notion further:

\begin{lemma}\label{lem: finitely many}
For all rational tangles $x$, there are only finitely many rational tangles $y$ for which $L(x, y) \not\in \mathcal{S}$.
\end{lemma}

\begin{proof}
Consider a skein triangle $L(p, w), L(q, w),$ and $L(r, w)$ with $L(p, w)$ and $L(q, w)$ satisfying the hypothesis of the lemma. Let $\mathcal{U}$ be the set of all $y$ such that one of $L(p, y)$ or $L(q, y)$ is not in $\mathcal{S}$ or $\det(L(r, y)) = 0$, and note $\mathcal{U}$ is finite by our assumption and Lemma \ref{lem: at most one}. For all $z \not\in \mathcal{U}$, $L(p, z)$ and $L(q, z)$ are in $\mathcal{S}$ and $\det(L(r, z)) \neq 0$, so by closure under the unoriented skein triangle, $L(r, z) \in \mathcal{S}$; i.e., there are only finitely many $y$ for which $L(r, y) \not\in \mathcal{S}$. Starting with $L(\frac{0}{1}, y)$ and $L(\frac{1}{0}, y)$, it follows from Lemma \ref{lem: tangle span} that there are only finitely many $y$ for which $L(x, y) \not\in \mathcal{S}$ for all rational tangles $x$.
\end{proof}

In particular, take $r = \frac{a}{b}$ and $s = \frac{c}{d}$. By assumption $L(\frac{a}{b}, \frac{c}{d})$ has nonzero determinant. There exists $\frac{c'}{d'}$ satisfying $|cd' - dc'| = 1$, since we can set $c'$ equal to the unique residue $n \equiv -d^{-1} \pmod{c}$ and $d'$ equal to $\frac{c'd+1}{c}$. Consider $L(\frac{a}{b}, \frac{c' + mc}{d' + md})$ and $L(\frac{a}{b}, \frac{c' + (m+1)c}{d' + (m+1)d})$ and note that for sufficiently large $m$, these two links are contained in $\mathcal{S}$, as there are only finitely many $y$ for which $L(\frac{a}{b}, y) \not\in \mathcal{S}$ by Lemma \ref{lem: finitely many}. Then $L(\frac{a}{b}, \frac{c}{d}), L(\frac{a}{b}, \frac{c' + mc}{d' + md}),$ and $L(\frac{a}{b}, \frac{c' + (m+1)c}{d' + (m+1)d})$ form an unoriented skein triangle, and we can use the closure condition to conclude that $\mathcal{S}$ contains $L(\frac{a}{b}, \frac{c}{d}) = L$, as desired. The conclusions of Cases $1$ and $2$ show that for all nonzero determinant links $L$ with at most $2$ components, $L$ is contained in $\mathcal{S}$.  

Our base case is thus completed. Assume a link with nonzero determinant $L$ has $k$ components and that the hypothesis holds for all nonzero determinant $k-1$ component links. Again, we want to show $L \in \mathcal{S}$. Find two components, say $L_1$ and $L_2$, that intersect at a crossing. Such two components must exist, otherwise we have a split configuration and $\det(L) = 0$ by Lemma \ref{lem: split config}, contradiction. Consider the disk $D$ in the complexity decomposition containing this crossing, and suppose $\frac{p}{q}$ is the rational tangle in the interior. After deleting the contents of $D$, two of its endpoints belong to one component, while the other two endpoints belong to the other component. Equipped with Lemma \ref{lem: endpoint connection}, we can finally complete the proof of our theorem. 

Let $L(x)$ denote the link obtained after inserting rational tangle $x$ into $D$. There exists $\frac{p'}{q'}$ satisfying $|pq' - qp'| = 1$, so consider $L(\frac{p}{q}), L(\frac{p' + mp}{q' + mq}),$ and $L(\frac{p' + (m+1)p}{q' + (m+1)q})$, which form an unoriented skein triangle for all $m \in \mathbb{Z}$. Moreover, since $L(\frac{p}{q})$ has nonzero determinant, Lemma \ref{lem: zero determinant condition} states that there is at most one $x$ for which $\det(L(x)) = 0$, so take $m$ sufficiently large to avoid this $x$. Thus the links in the skein triangle all have determinant nonzero.

Because $|pq' - qp'| = 1$, $\frac{p}{q}$ is not congruent to $\frac{p'}{q'} \pmod 2$. Thus, out of $\frac{p}{q}, \frac{p' + mp}{q' + mq}$ and $\frac{p' + (m+1)p}{q' + (m+1)q}$, no two are congruent modulo $2$, so each corresponding rational tangle connects the endpoints of $D$ differently by Lemma \ref{lem: endpoint connection}. Inserting $\frac{p' + mp}{q' + mq}$ and $\frac{p' + (m+1)p}{q' + (m+1)q}$ will merge components $L_1$ and $L_2$, which implies $L(\frac{p' + mp}{q' + mq})$ and $L(\frac{p' + (m+1)p}{q' + (m+1)q})$ have $k-1$ components. By the inductive hypothesis, $L(\frac{p' + mp}{q' + mq})$ and $L(\frac{p' + (m+1)p}{q' + (m+1)q})$ are contained in $\mathcal{S}$, and using closure under the unoriented skein triangle, it follows that $L \in \mathcal{S}$.

This completes our induction, and we can conclude that all links with nonzero determinant are contained in $\mathcal{S}$, as desired.
\end{proof}

\begin{flushleft}
As a direct corollary, we can determine a nonzero determinant link invariant satisfying an unoriented skein relation by its value on the unknot:
\end{flushleft}

\begin{corollary}
Let $\chi$ be an invariant of non-zero determinant unoriented links and suppose $\chi$ has a recursive formula for the following triple of links differing at a single crossing:
\[(L, L_0, L_1) \hspace{0.25cm} \text{whenever} 
\begin{array}{cc}
    \det(L) \\
     \det(L_0) \\ 
      \det(L_1) \\
\end{array} \Bigg\} \neq 0
\]
Then $\chi$ is completely determined by its value on the unknot.
\end{corollary}

\section{Oriented Generating Set}
\begin{flushleft}
We shall prove our second theorem, which considers oriented links. $\mathcal{S}$ is now closed under the oriented skein triangle:
\begin{itemize}
\item Suppose three links with nonzero determinant constitute an oriented skein triangle. Then if any two of these belong to $\mathcal{S}$, so does the third.
\end{itemize}
\end{flushleft}

\begin{flushleft}
Although the implementation of Lemma \ref{lem: oriented skein} requires significantly more care than that of Lemma \ref{lem: unoriented skein}, many of our tools from the unoriented case translate directly to the oriented case. The oriented skein relation is weaker than the unoriented relation, so we use a slightly larger generating set for $\mathcal{S}$:
\end{flushleft}

\begin{theorem}
Given a set S of oriented nonzero determinant links, suppose that it is closed under the
oriented skein triangle and contains the unknot and all orientations of the Hopf link. Then it contains all
nonzero determinant links.
\end{theorem}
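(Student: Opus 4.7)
The plan is to mirror the proof of Theorem \ref{main theorem unoriented} with an outer induction on the number of components and an inner induction on complexity, replacing every use of the unoriented skein triangle by an oriented-skein-compatible substitute built from Lemma \ref{lem: oriented skein}. The determinant-side machinery of Section 3.1 (Lemma \ref{lem: determinant tangle lemma}, Corollary \ref{lem: determinant of complexity one}, Lemma \ref{lem: zero determinant condition}, Lemma \ref{lem: finitely many}) is entirely orientation-insensitive and transfers verbatim, as does the connected-sum argument of Lemma \ref{lem: connected sum lemma} once one fixes consistent orientations across the connect-sum region.

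The essential new ingredient is an oriented analogue of the tangle-span lemma. Where the unoriented argument produces both Farey mediants $\frac{a+c}{b+d}$ and $\frac{a-c}{b-d}$ simultaneously from a pair $\frac{a}{b}, \frac{c}{d}$ with $|ad-bc|=1$, Lemma \ref{lem: oriented skein} tells us only that $\frac{a+c}{b+d}$ and $\frac{a-c}{b-d}$ are related by a crossing change and that exactly one of $\frac{a}{b}, \frac{c}{d}$ is the oriented resolution of that crossing. I would therefore stratify rational tangles by the boundary-orientation class determined by the parity data of Lemma \ref{lem: endpoint connection}, and run the strong induction on denominator separately in each class, using $\frac{0}{1}$, $\frac{1}{0}$, and the Hopf-link tangle in that class as seeds. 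Each Farey step produces a new denominator-$k$ tangle via an oriented skein triangle whose two other members have strictly smaller denominator and the correct boundary orientation.

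Given the oriented tangle span, the complexity-$1$ base case proceeds as in Lemma \ref{lem: complexity one in s}: integer tangles give unknots, the Hopf link orientations supply the denominator-$2$ seeds in each orientation class, and Corollary \ref{lem: determinant of complexity one} ensures every resulting link has nonzero determinant. The outer inductions on complexity and on component count then follow the unoriented proof almost line by line: locate a disk whose rational tangle contains a crossing between the two chosen components, use Lemma \ref{lem: finitely many} to find a Farey neighbor $\frac{p' + mp}{q' + mq}$ large enough to avoid both zero-determinant exceptions and $\mathcal{S}$-exceptions, and conclude by closure under the oriented skein. The parity test in Lemma \ref{lem: endpoint connection} still certifies that the replaced tangles fuse two components into one, so the inductive hypothesis on the smaller component count applies.

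The main obstacle is the oriented tangle span. The unoriented recursion is symmetric — two siblings give both mediants at once — while the oriented version is rigid and requires matching which of the siblings $\frac{a}{b}$ or $\frac{c}{d}$ is produced at each step and with which orientation. This is precisely what forces both Hopf-link orientations into the hypothesis: they supply the denominator-$2$ orientation-class seed that cannot be reached from the unknot alone by any chain of oriented skein triangles. Aligning the orientation choice dictated by Lemma \ref{lem: oriented skein} with the parity classification of Lemma \ref{lem: endpoint connection} across every Farey step is where the bulk of the work lies.
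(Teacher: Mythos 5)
Your outline has the right ingredients for the complexity-$1$ case (the orientation-stratified Farey induction seeded by the trivial tangles and the Hopf link orientations is essentially the paper's Lemma \ref{lem: oriented complexity one}) and for the outer induction on the number of components. The gap is in the two-component base case, where you assert that the inner induction on complexity ``follows the unoriented proof almost line by line'' and that Lemma \ref{lem: finitely many} transfers verbatim as determinant-side machinery. It does not: the proof of Lemma \ref{lem: finitely many} invokes closure under the \emph{unoriented} skein triangle and the unoriented tangle span (Lemma \ref{lem: tangle span}), and its induction is seeded by $L(\frac{0}{1}, y)$ and $L(\frac{1}{0}, y)$ --- precisely the two insertions into $C_1$ that lower complexity and hence fall under the inductive hypothesis. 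In the oriented setting exactly one of the three endpoint-parity classes of Lemma \ref{lem: endpoint connection} is orientation-incompatible with the disk, and it may well be the class of $\frac{0}{1}$ or of $\frac{1}{0}$; the only available replacement seed then lies in the odd/odd class (e.g.\ $\frac{1}{1}$), whose insertion does \emph{not} reduce complexity, so the complexity induction cannot get started. This is not a matter of ``aligning orientations across Farey steps'': it disables the engine of Case 2 of the unoriented argument.

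The paper sidesteps this by abandoning the complexity induction for the two-component base case altogether. It holds the tangle slot fixed and performs a double induction on the \emph{lifting number} (the minimal number of crossing changes between the external strands $L_1$ and $L_2$ needed to separate them) and the \emph{external crossing number}. Changing a crossing between $L_1$ and $L_2$ lowers the lifting number, and resolving it lowers the external crossing number, so $(O(x), O_1(x), O_2(x))$ is an oriented skein triangle with the \emph{same} tangle $x$ in all three members; Lemma \ref{lem: oriented zero det cond} controls the finitely many bad $x$, and a Farey-neighbor argument with the parity of $m$ chosen for orientation compatibility finishes the step. If you wish to keep your architecture, you must prove an oriented two-slot analogue of Lemma \ref{lem: finitely many} from scratch, including producing a complexity-reducing seed in each of the two orientation-compatible parity classes, and it is not clear that this can be done.
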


\begin{proof}
We will induct on the number of components, our base case asserting that all oriented links with at most two components and nonzero determinant are contained in $\mathcal{S}$.

For a planar projection of a nonzero determinant oriented link $L$ with at most two components (these components must intersect, otherwise the split configuration has zero determinant by Lemma \ref{lem: split config}), recall there exists a disk with a rational tangle inside that partitions $L$ into the interior of the disk and exactly two strands outside the disk: $L_1$ and $L_2$. Note that the respective endpoints of the two strands are either (based on figure $6$) $ab$ and $cd$, $ac$ and $bd$, or $ad$ and $bc$. 

Thus any oriented $L$ may be represented by choosing a disk that partitions the oriented link diagram into strands $L_1$, $L_2$, and the interior of the disk. An \textit{open configuration} is then obtained by deleting the contents of the disk.  Let $O(x)$ denote the link obtained by inserting rational tangle $x$ into the disk. Note $O$ has a preexisting orientation that is passed onto $x$. As a result, the insertion of $x$ may not produce an orientation compatible link. We claim that for any open configuration and rational tangle $x$, if the corresponding $O(x)$ is orientation compatible and has nonzero determinant, then $O(x) \in \mathcal{S}$.

To show this, we shall perform a double induction on two quantities: the \textit{lifting number} and the \textit{external crossing number}. If $\mathcal{C}$ is the set of crossings between $L_1$ and $L_2$, then the lifting number is the minimal number of crossing changes among $\mathcal{C}$ required to isotopically separate $L_1$ from $L_2$ (without moving end points) in the complement of the ball in $S^{3}$ that corresponds to the disk on the plane. The external crossing number is the number of crossings outside the disk.

Let $\mathcal{P}(x, y)$ denote the assertion that all open configurations with lifting number at most $x$ and external crossing number at most $y$ satisfy the hypothesis of the claim. Note that the lifting number is automatically $0$ if the external crossing number is zero. In this case, we get a complexity $1$ link, so $\mathcal{P}(k, 0)$ for $k \geq 0$ is true by the following lemma:

\begin{lemma}\label{lem: oriented complexity one}
All oriented nonzero determinant links of complexity $1$ are contained in $\mathcal{S}$.
\end{lemma}

\begin{proof}
Let $O$ be an open configuration such that $O(x)$ has complexity $1$. For this proof, instead of imposing the orientation of $O$ onto $x$, we shall consider the equivalent notion of inserting an oriented rational tangle $x$ and imposing its orientation onto $O$. In other words, we insert such an oriented $x$ into figure $8$, but $x$ must have an orientation compatible with this figure. Call an oriented rational tangle \textit{parallel} if the two strands have the same orientation and \textit{antiparallel} otherwise. Using Lemma \ref{lem: endpoint connection}, if $\frac{p}{q}$ is such that $q$ is even, any orientation is compatible since the top two endpoints belong to different components. If $q$ is odd, the resulting link has $1$ component, so $p$ even and odd allow for strictly antiparallel and parallel orientations respectively.

To show $O(\frac{p}{q}) \in \mathcal{S}$ for all orientation compatible rational tangles $\frac{p}{q}$, we shall strong induct on $p$ in the positive direction. The negative direction follows similarly. The base case $p = 0$ is the oriented unknot, so we now examine $p = 1$, again using induction. $O(\frac{1}{1})$ is in $\mathcal{S}$ in parallel orientation as it is isotopic to the unknot. Moreover, all orientations of the Hopf link are already in $\mathcal{S}$, so $O(\frac{1}{2}) \in \mathcal{S}$ for both the parallel and antiparallel orientations of $\frac{1}{2}$. Assuming $O(\frac{1}{k}) \in \mathcal{S}$ for all orientation compatible $\frac{1}{k}$ with $0 < k < n$, we now consider $O(\frac{1}{n})$. If $n$ is odd, $\frac{1}{n}$ must have parallel orientation, which implies $(O(\frac{1}{n-2}), O(\frac{1}{n-1}), O(\frac{1}{n}))$ is an oriented skein triangle. For $n \geq 3$, all three of these links have nonzero determinant by Corollary \ref{lem: determinant of complexity one}, admitting $O(\frac{1}{n}) \in \mathcal{S}$. 

Similarly, if $n$ is even, we can use the oriented skein triangles $(O(\frac{1}{n-2}), O(\frac{1}{n-1}),$ $O(\frac{1}{n}))$ or $(O(\frac{1}{n-2}), O(\frac{0}{1}), O(\frac{1}{n}))$ if $\frac{1}{n}$ has parallel or antiparallel orientation respectively. For $n > 2$, all such links have nonzero determinant, implying $O(\frac{1}{n}) \in \mathcal{S}$. Therefore $O(\frac{1}{m}) \in \mathcal{S}$ for all orientation compatible $\frac{1}{m}$ and $m > 0$, completing the base case $p = 1$.

We now proceed in a similar manner to Lemma \ref{lem: tangle span}. For fixed $k > 1$, suppose that the oriented $O(\frac{a}{b})$ is contained in $\mathcal{S}$ for all $0 \leq a < k$. We will show $O(\frac{k}{ki + j}) \in \mathcal{S}$ for all compatibly oriented $\frac{k}{ki + j}$ with $i \geq 0$ and $0 < j < k$ such that gcd$(j, k) = 1$. The case $(i, j) = (0, 1)$ is equivalent to the oriented unknot, so we may assume $(i, j) \neq (0, 1)$. Let $p$ be the unique residue congruent to the inverse of $j \pmod{k}$ and set $q = \frac{pki + pj - 1}{k}$, so that $q$ is a positive integer and satisfies gcd$(p, q) = 1$.

Similarly, let $r = k - p$ and set $s = \frac{rki + rj - 1}{k}$, which gives gcd$(r, s) = 1$. In fact, $\frac{p+r}{q+s} = \frac{k}{ki+j}$, $|ps - rq| = 1$, and $|p - r| < k$ because $p, r < k$. By the inductive hypothesis and Lemma \ref{lem: oriented skein}, one of $(O(\frac{k}{ki+j}), O(\frac{p}{q}), O(\frac{p-r}{q-s}))$ and $(O(\frac{k}{ki+j}), O(\frac{r}{s}), O(\frac{p-r}{q-s}))$ is an oriented skein triangle for each compatible orientation of $\frac{k}{ki+j}$. Therefore $O(\frac{k}{ki + j}) \in \mathcal{S}$ for all compatibly oriented $\frac{k}{ki + j}$, using the closure condition on $\mathcal{S}$. This concludes our induction, so all oriented nonzero determinant links of complexity $1$ are contained in $\mathcal{S}$, as desired.
\end{proof}

Now assume that we have established $\mathcal{P}(k,j-1)$ for some fixed $j \geq 1$ and all $k \geq 0$. We want to show that $\mathcal{P}(k, j)$ is true for all $k \geq 0$. Beginning with the $\mathcal{P}(0,j)$ case, consider any open configuration $O$ with lifting number $0$ and external crossing number $j$, and note that $L_1$ and $L_2$ can be separated. If both strands are trivial; i.e., all external crossings are contained in $\mathcal{C}$, $O(x)$ corresponds to a complexity $1$ link, so $\mathcal{P}(0, j)$ is true by Lemma \ref{lem: oriented complexity one}. 

However, if at least one of the strands is nontrivial, we can decompose $O(x)$ into a connected sum of two links which can be represented by open configurations with strictly less external crossing number. For $O(x)$ with nonzero determinant, Proposition \ref{lem: connected sum multiply} implies the two links must also have nonzero determinant. By the inductive hypothesis, they are contained in $\mathcal{S}$, so we may repeat the argument of Lemma \ref{lem: connected sum lemma} verbatim to show $O(x) \in \mathcal{S}$, establishing $\mathcal{P}(0, j)$.

Finally, we must show that if $\mathcal{P}(i, j-1)$ and $\mathcal{P}(i-1, j)$ both hold, then so does $\mathcal{P}(i, j)$ for fixed $i \geq 1$. Consider any open configuration $O$ with lifting number $i$ and external crossing number $j$. There exists a sequence of crossings for the lifting number in $O$ that will separate $L_1$ and $L_2$ after a crossing change is applied to each of them in sequence. Choose the first crossing in that sequence, and suppose its crossing change and resolution result in $O_1$ and $O_2$ respectively.

Clearly $O_1$ is still an open configuration, and since the crossings for the lifting number are between two different strands $L_1$ and $L_2$, $O_2$ gains no new components from the resolution and is also an open configuration. Hence, $(O(x), O_1(x), O_2(x))$ is an oriented skein triangle for any $x$. We will need the following oriented version of Lemma \ref{lem: zero determinant condition}:

\begin{lemma}\label{lem: oriented zero det cond}
For any open configuration, there exists at most one rational tangle that may be inserted into the disk so that the resulting link is orientation compatible and has zero determinant.
\end{lemma}

\begin{proof}
Lemma \ref{lem: endpoint connection} gives three types of rational tangles based on paired numerator-denominator parity that connect the endpoints of the disk in three different ways. Exactly two of these types will merge strands $L_1$ and $L_2$ so that the resulting link (neglecting orientation) is a knot. 

Furthermore, exactly two of the three types will be compatible with the orientation, as for each of $L_1$ and $L_2$, one endpoint must exit the disk and one endpoint must enter the disk since the orientation runs along the same strand. This means there are exactly two exiting and two entering endpoints. Since an exiting endpoint must be connected to an entering endpoint for orientation compatibility, there are exactly two ways for a rational tangle to connect the endpoints of the disk to produce an oriented link.

It follows that there must be at least one type of rational tangle whose insertion forms an orientation compatible link, which has nonzero determinant. By Lemma \ref{lem: zero determinant condition}, there is at most one rational tangle which can be placed inside the disk for the resulting link to have determinant zero, as desired.
\end{proof}

Returning to the $(O(x), O_1(x), O_2(x))$, Lemma \ref{lem: oriented zero det cond} states for each member of the oriented skein triangle, there is at most one rational tangle whose insertion obtains an orientation compatible link with zero determinant. Suppose $t$ is not one of these rational tangles and is compatible with the orientation of $O$ (and hence with the orientations of $O_1$ and $O_2$). By the inductive hypothesis $O_1(t)$ and $O_2(t)$ are contained in $\mathcal{S}$, which implies the same for $O(t)$. This means that for all but finitely many orientation compatible $x$, $O(x) \in \mathcal{S}$.

We now show that if $O(\frac{p}{q})$ has nonzero determinant and $\frac{p}{q}$ is orientation compatible with $O$, the resulting oriented link is contained in $\mathcal{S}$. Let $\frac{p'}{q'}$ be such that $|pq' - qp'| = 1$, and consider $\frac{p'+ (m+1)p}{q' + (m+1)q}$, which forms an unoriented skein triangle with $\frac{p}{q}$ and $\frac{p'+ mp}{q' + mq}$. These three fractions have distinct numerator-denominator parity types, and since exactly two such types are orientation compatible, we may choose the parity of $m$ so that $\frac{p'+ (m+1)p}{q' + (m+1)q}$ and $\frac{p}{q}$ are compatible. Hence $(O(\frac{p}{q})$, $O(\frac{p'+ (m+1)p}{q' + (m+1)q})$, $O(\frac{p'+ (m-1)p}{q' + (m-1)q}))$ is an oriented skein triangle. For all but finitely many orientation compatible $x$, $O(x) \in \mathcal{S}$, so by choosing a sufficiently large $m$ with the appropriate parity, $O(\frac{p'+ (m+1)p}{q' + (m+1)q})$ and $O(\frac{p'+ (m-1)p}{q' + (m-1)q})$ must be contained in $\mathcal{S}$. Since $\mathcal{S}$ is closed under the oriented skein triangle, $O(\frac{p}{q}) \in \mathcal{S}$. This proves the assertion $\mathcal{P}(i, j)$ is true, which completes our induction.

We conclude that for any open configuration and rational tangle $x$, if the corresponding $O(x)$ is orientation compatible and has nonzero determinant, then $O(x) \in \mathcal{S}$. Since any oriented link with nonzero determinant and at most two components may be expressed in this way, all such links are contained in $\mathcal{S}$. 

Now that the base case is proven, suppose $L$ has $k$ components and nonzero determinant and that all oriented links with $k-1$ components and nonzero determinant are already contained in $\mathcal{S}$. Choose an intersection between two components which belongs to a rational tangle $\frac{p}{q}$ in disk $D$. Again, such an intersection exists otherwise the resulting split configuration would have zero determinant by Lemma \ref{lem: split config}. Deleting the interior of $D$, by Lemma \ref{lem: zero determinant condition} there exists at most one orientation compatible $x$ for which $L(x)$ has zero determinant. Considering the oriented skein triangle $(L(\frac{p}{q}), L(\frac{p'+ (m+1)p}{q' + (m+1)q}), L(\frac{p'+ (m-1)p}{q' + (m-1)q}))$, by the arguments above we may choose $m$ with sufficient parity and size so that $L(\frac{p'+ (m+1)p}{q' + (m+1)q})$ and $L(\frac{p'+ (m-1)p}{q' + (m-1)q})$ have nonzero determinant. Because these links also have $k-1$ components, they are contained in $\mathcal{S}$ by the inductive hypothesis, whence $L(\frac{p}{q}) \in \mathcal{S}$, as desired. This completes our original induction on the number of components, and we conclude that $\mathcal{S}$ contains all oriented links with nonzero determinant.
\end{proof}

\begin{flushleft}
With this theorem, we can greatly simplify Mullins theorem with the following corollary:
\end{flushleft}
\begin{corollary}
Let $\chi$ be an invariant of non-zero determinant oriented links and suppose $\chi$ has a recursive formula for the following triple of links differing at a single crossing:
\[(L_+, L_-, L_o) \hspace{0.25cm} \text{whenever} 
\begin{array}{cc}
    \det(L_+) \\
     \det(L_-) \\ 
      \det(L_o) \\
\end{array} \Bigg\} \neq 0
\]
Then $\chi$ is completely determined by its values on the unknot and all orientations of the Hopf link.
\end{corollary}

\section{Conclusions}
We have determined that all unoriented links with nonzero determinant can be generated by the unknot in a set subject to closure under the unoriented skein triangle. Moreover, all nonzero determinant oriented links can be generated by the oriented unknot and all orientations of the Hopf link in a set subject to closure under the oriented skein triangle. These theorems are extremely useful in characterizing nonzero determinant link invariants. For example, the computations of the Casson-Walker invariant in \cite{Mullins} and the Fr{\o}yshov invariant in \cite{Froyshov} can be immensely simplified. In addition, our theorems can be used to prove that two nonzero determinant link invariants coincide by showing they share the same skein relation and values on the unknot and the oriented Hopf links. This provides a powerful tool to establish new relations between classical link invariants. 

\section{Acknowledgements}

I would like to express my utmost gratitude to Dr. Jianfeng Lin of MIT, for proposing this project and mentoring me. I am extremely thankful for his invaluable help and support and for the time he spent with me throughout the research process. I would also like to thank Dr. Tanya Khovanova and Svetlana Makarova, both from MIT, who proofread my paper and provided very helpful suggestions. I am extremely grateful for PRIMES-USA and everyone involved in this program for providing me with this wonderful research opportunity.


\begin{thebibliography}{4}
\bibitem{Alexander} Alexander, J. W. (1928). Topological invariants of knots and links. \textit{Transactions of the
American Mathematical Society, 30}(2), 275-306.


\bibitem{Kauffman} Kauffman, L. H., \&  Lopes, P. (2009). Determinants of rational knots. \textit{Discrete Mathematics and Theoretical
Computer Science, 11}(2), 111-122.

\bibitem{Lickorish} Lickorish, W. B. R. (1997). \textit{An introduction to knot theory} (pp. 1-10, 23-25, 57-58, 93-101). New York, NY: Springer-Verlag.    

\bibitem{Froyshov} Lin, J., Ruberman, D., \& Saveliev, N. (2018). On the Fr{\o}yshov invariant and monopole Lefschetz number. \textit{arXiv preprint arXiv:1802.07704v1}.

\bibitem{Mellor and Nevin} Mellor, B., \& Nevin, S. (2018). Virtual rational tangles. \textit{arXiv preprint arXiv:1805.12072v1}.

\bibitem{Mullins} Mullins, D. (1993). The generalized Casson invariant for 2-Fold
branched covers of $S^3$ and the Jones polynomial. \textit{Topology, 32}(2), 419-438.

\bibitem{Osvath} Ozsv\'ath, P., \& Szab\'o, Z. (2005). On the Heegaard Floer homology of branched double-covers. \textit{Advances in Mathematics 194}(1), 1-33.













\end{thebibliography}
\end{document}